\numberwithin{equation}{section}
\newtheorem{theorem}{Theorem}[section]
\newtheorem{lemma}[theorem]{Lemma}
\newtheorem{cor}[theorem]{Corollary}
\newtheorem{hypothesis}[theorem]{Hypothesis}
\theoremstyle{definition}
\theoremstyle{remark}
\newtheorem{remark}[theorem]{Remark}
\numberwithin{equation}{section}
\title{The Dirichlet Problem for Einstein Metrics on Cohomogeneity One Manifolds}
\author{Timothy Buttsworth}
\begin{document}
\maketitle
\begin{abstract}
Let $G/H$ be a compact homogeneous space, and let $\hat{g}_0$ and $\hat{g}_1$ be $G$-invariant Riemannian metrics on $G/H$. 
We consider the problem of finding a $G$-invariant Einstein metric $g$ on the manifold $G/H\times [0,1]$ subject to the 
constraint that $g$ restricted to $G/H\times \{0\}$ and $G/H\times \{1\}$ coincides with $\hat{g}_0$ and $\hat{g}_1$, respectively. 
By assuming that the isotropy representation of $G/H$ consists of pairwise inequivalent irreducible summands, 
we show that we can always find such an
Einstein metric.
\end{abstract}
\section{Introduction}
Let $M$ be a smooth manifold. This paper concerns finding Riemannian metrics $g$ on $M$ that are Einstein, i.e., whose Ricci curvature $Ric(g)$ satisfies 
\begin{equation}\label{EE}
Ric(g)=\lambda g
\end{equation}
for some constant $\lambda$ on $M$.
Physically, Einstein metrics are of interest because in the Lorentzian setting they describe the geometry of a vacuum according to Einstein's theory 
of relativity. 
They are also of fundamental interest in geometry because a manifold 
with an Einstein metric can be viewed as having `constant curvature'; see the introduction to the subject in Chapter 0 of \cite{Besse}. 

For open and closed manifolds, there are several results relating to the solvability of \eqref{EE}, 
and a large and detailed survey of some classical results appears in \cite{Besse}. Some more recent results are available in \cite{Aubin}. 
In addition to open and closed manifolds, it is natural to consider 
the Einstein equation, as well as other geometric PDEs, on manifolds with boundary, in which case one prescribes various boundary conditions. 
Anderson studies the problem of solving \eqref{EE} on manifolds with boundary in \cite{Anderson08}, 
and considers Dirichlet 
conditions, Neumann conditions, and prescribing the conformal class of the metric as well as the mean curvature 
at the boundary. He demonstrates that the Einstein equation with Dirichlet conditions is not 
Fredholm, which makes it difficult to study in general. However, this equation is Fredholm under the prescription of mean curvature and conformal class. 
The issues of choosing appropriate boundary conditions have also come up in other 
geometric equations. For example, in the study of the Ricci flow, Pulemotov and Gianniotis study boundary conditions involving the mean curvature and conformal class in
\cite{APQLRF} and \cite{PG} respectively, 
while Shen and Pulemotov study Robin-type and Neumann-type boundary conditions in \cite{Shen} and \cite{PulemotovRF} respectively. 
Pulemotov also prescribes Dirichlet conditions in his study of the prescribed Ricci curvature problem in \cite{APC}. 

Finding general results about the solvability of \eqref{EE} tends to be difficult, so an effort has been made to study the problem in simpler settings. 
For example, when our manifold $M$ 
is acted on transitively by some Lie group $G$, and we require
that our Einstein metric is 
invariant under the action of $G$, \eqref{EE} becomes a system of algebraic equations. Finding solutions to this system is known 
as the problem of finding homogeneous Einstein metrics 
and has been studied extensively in, for example, \cite{Park}, \cite{Wang}, \cite{BWZ},
\cite{Arvanitoyeorgos}, \cite{Jensen} and \cite{B}. 
After the homogeneous setting, the next natural step is requiring that our $d$-dimensional manifold $M$ has a 
$G$-action whose orbits are $(d-1)$-dimensional. Here, 
we say our manifold is cohomogeneity one, and it is natural to 
 restrict attention 
to metrics on $M$ that are $G$-invariant. In this case, \eqref{EE}, as well as many other geometric equations, becomes a system of ODEs rather than a system of PDEs, 
and results about existence seem easier to obtain. One of the first examples of a cohomogeneity one Einstein metric appeared in \cite{Page}, 
and subsequently, the general theory began to be developed by B\'erard-Bergery in \cite{BB}, and by Page and Pope in \cite{PP}. Since then, there has been much work 
done in the area of cohomogeneity one Einstein metrics. For example, in \cite{EschenburgWang}, Eschenburg and Wang 
study the initial value problem for the Einstein metric in a neighbourhood of a fixed orbit. 
The Einstein equation has also been studied by Dancer and Wang in \cite{DancerWang} by viewing the ODE as a Hamiltonian flow. The cohomogeneity one setting was 
also used by Pulemotov in his work on the prescribed Ricci 
curvature problem in \cite{APC}, in the study of Ricci solitons by Dancer and Wang in \cite{DancerWangS}, and in the work on the Ricci flow done by Pulemotov in
\cite{PulemotovRF} and by Bettiol and Krishnan in \cite{BetKri}. 

The setting of this paper is the study of the Einstein equation on cohomogeneity one manifolds $M$ subject to boundary conditions.  
We assume that $M$ appears as $G/H\times [0,1]$, where $G/H$ is a compact homogeneous space, and the boundary of our manifold $M$ is
$\left(G/H\times \{0\}\right)\cup \left(G/H\times \{1\}\right)$. The Dirichlet problem in this case consists 
in finding Einstein metrics that coincide 
with two fixed $G$-invariant Riemannian metrics $\hat{g}_0$ and $\hat{g}_1$ on $G/H\times \{0\}$ and $ G/H\times \{1\}$, respectively. 
We demonstrate in this paper that we can always find such an Einstein metric after we impose the ``monotypic'' assumption on the compact homogeneous space $G/H$. 
This assumption essentially allows us to diagonalise the Einstein equation, substantially simplifying analysis. 
This assumption has been used by a number of authors, for example in 
\cite{DancerWang} and \cite{GroveZiller}. 
\section{Preliminaries and the Main Result}
Before we state the main result, we will provide some background and notation, and state some assumptions. 
We let $G$ be a compact Lie group, 
and let $H$ be a closed Lie subgroup of $G$. We let $\mathfrak{g}$ and $\mathfrak{h}$ be the Lie algebras of $G$ and $H$ respectively. Once 
we choose some Ad$(G)$-invariant inner product $Q$ on $\mathfrak{g}$, 
we let 
$\mathfrak{m}$ be the $Q$-orthogonal complement 
of $\mathfrak{h}$ in $\mathfrak{g}$, and naturally identify $\mathfrak{m}$ with the tangent space of $G/H$ at $H$. Now take a $Q$-orthogonal 
decomposition  
\begin{equation}\label{DC}
\mathfrak{m}=\bigoplus_{i=1}^{n} \mathfrak{m}_i
\end{equation}
such that each $\mathfrak{m}_i$ in \eqref{DC} is an irreducible Ad$(H)$ submodule.
We make the following assumption on this decomposition. 
\begin{hypothesis}\label{IID}
The submodule $\mathfrak{m}_{i_1}$ is non-isomorphic to $\mathfrak{m}_{i_2}$ whenever $i_1\neq i_2$. 
\end{hypothesis}
\noindent
This assumption is of great convenience as it ensures that the decomposition \eqref{DC} is unique up to the order of summands. 
Furthermore, according to \cite[Lemma 1.1]{Park}, 
this assumption ensures that any diagonal metric respecting the decomposition 
\eqref{DC} has diagonal Ricci curvature which also respects this decomposition. 
We will also assume that the dimension of $\mathfrak{m}$ is strictly greater than $1$. 

The cohomogeneity one manifold we will study is $G/H\times [0,1]$. 
The boundary of this manifold is $(G/H\times \{0\})\cup (G/H\times \{1\})$, so prescribing Dirichlet conditions involves fixing $\hat{g}_0$ and $\hat{g}_1$, 
two $G$-invariant Riemannian metrics on $G/H$. 
Hypothesis \ref{IID} implies the existence of two arrays of positive numbers $(a_i)_{i=1}^{n}$ and $(b_i)_{i=1}^{n}$ such that 
\begin{align*}
\hat{g}_0(X,Y)&=\sum_{i=1}^{n} a_i^2 Q(pr_{\mathfrak{m}_i} X,pr_{\mathfrak{m}_i}Y)\\
\hat{g}_1(X,Y)&=\sum_{i=1}^{n} b_i^2 Q(pr_{\mathfrak{m}_i} X,pr_{\mathfrak{m}_i}Y)
\end{align*}
for all $X,Y\in \mathfrak{m}$. Here, 
$pr_{\mathfrak{m}_i} X$ denotes the $Q$-orthogonal projection of the vector $X$ onto $\mathfrak{m}_i$.  

We can now state our main result. 
\begin{theorem}\label{MRCHEM}
Let $G/H$ be a compact homogeneous space of dimension at least $2$, and suppose that Hypothesis \ref{IID} holds. 
Fix any two $G$-invariant Riemannian metrics $\hat{g}_0$ and $\hat{g}_1$ on $G/H$. Then 
there exists a $G$-invariant Einstein metric $g$ on $G/H\times [0,1]$ such that the metrics on $G/H\times \{0\}$ and 
$G/H\times \{1\}$ induced by $g$ coincide with $\hat{g}_0$ and $\hat{g}_1$, respectively. 
\end{theorem}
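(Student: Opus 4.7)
Setup. By Hypothesis \ref{IID}, any $G$-invariant Riemannian metric on $G/H\times[0,1]$ has the form $g=\phi(t)^2\,dt^2+\sum_{i=1}^n f_i(t)^2\,Q|_{\mathfrak{m}_i}$ for positive functions $\phi,f_1,\dots,f_n$. After a reparametrization of $t$ I may absorb $\phi$ and instead work on an interval $[0,L]$ of unknown length $L>0$ with unit lapse, imposing the Dirichlet data $f_i(0)=a_i$, $f_i(L)=b_i$. The cited result \cite[Lemma 1.1]{Park} says that the Ricci tensor is diagonal in the same frame, so $Ric(g)=\lambda g$ reduces to $n$ second-order ODEs for the $f_i$ (one per summand, depending explicitly on $\lambda$, the dimensions $d_i=\dim\mathfrak{m}_i$ and the structure constants of the decomposition \eqref{DC}), together with a first-order Hamiltonian-type constraint coming from the $\partial_t\partial_t$ component. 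The Bianchi identities guarantee that the constraint is preserved by the evolutionary equations, so it only needs to be enforced at a single point.

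Strategy. The plan is to solve this nonlinear BVP for the unknowns $(f_1,\dots,f_n,L,\lambda)$ by a Leray--Schauder continuity argument. I would introduce a one-parameter family of Dirichlet data $(\hat g_0^{s},\hat g_1^{s})$, $s\in[0,1]$, with $s=1$ giving the prescribed data and $s=0$ giving data for which a solution can be written down explicitly (for instance, boundary data arising from restricting a known warped-product Einstein metric, or data so close to one another that a perturbative argument off the lapse-$1$ product metric succeeds). Openness of the set of $s$ admitting a solution follows from the implicit function theorem once the linearization is seen to be invertible: although Anderson showed that the Dirichlet problem for the Einstein equation is not Fredholm on general manifolds, in the present setting the monotypic assumption reduces the linearized problem to a linear ODE system on $[0,L]$ with Dirichlet conditions, which is Fredholm of index zero. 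Closedness, and ultimately the computation of a nonzero degree, will come from \emph{a priori} estimates on the solutions.

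Main obstacle. The crux is establishing uniform \emph{a priori} control along the homotopy: preventing any $f_i$ from collapsing ($f_i\to 0$) or blowing up ($f_i\to\infty$), keeping $\lambda$ and the length $L$ bounded away from $0$ and $\infty$, and controlling the derivatives $\dot f_i$. For the pointwise lower and upper bounds on the $f_i$, the natural tool is the Hamiltonian constraint, which provides a conserved quantity of the flow and couples $(f_i,\dot f_i)$ to the scalar curvature of the slice; combining this with the explicit homogeneous Ricci formulas and maximum-principle-type comparisons on the individual ODEs should rule out degeneration. The length $L$ can be controlled using the fact that on an interval that is too long, the evolutionary ODEs force at least one $f_i$ to leave the compact region in which the constraint surface lives. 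The parameter $\lambda$ is then controlled by integrating the scalar curvature equation over $[0,L]$ against the volume element and using the pointwise bounds already obtained. Once these estimates are in place, the standard degree/continuity machinery delivers a solution at $s=1$, proving Theorem \ref{MRCHEM}.
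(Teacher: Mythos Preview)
Your overall architecture (reduce to diagonal ODEs, use the Bianchi identity to enforce the constraint only at one endpoint, then run a degree/continuation argument) matches the paper. But two of the load-bearing steps in your outline are precisely the ones the paper \emph{avoids} rather than proves, and your substitutes are not filled in.

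\textbf{The homotopy.} You propose to deform the boundary data from an easy pair to the given one and carry a solution along. The paper's homotopy is different and much more effective: it fixes the (log of the) boundary data and instead switches the \emph{equation} on in stages, so that for the first three quarters of the homotopy the curvature terms $\tilde S,\tilde R$ are absent and the system is literally the torus system \eqref{TL}--\eqref{TM}--\eqref{ICT}. That system is solved explicitly in Section~3, with uniqueness and, crucially, the quantitative bound of Lemma~\ref{P3DC}. This is what produces the bounded set $\Omega$ and the computation $\deg(I-H(\tfrac34,\cdot),\Omega,0)=1$ (Corollary~\ref{T34}). Your version would instead require a priori bounds for the \emph{full} curvature-coupled system along the whole homotopy, which you do not supply.

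\textbf{The a priori estimates and the length $L$.} You identify the main obstacle correctly but then assert that the Hamiltonian constraint plus maximum-principle comparisons ``should rule out degeneration'' and that $L$ can be controlled because long intervals force some $f_i$ off the constraint surface. Neither claim is justified, and Section~5 of the paper shows that for some boundary data no solution exists with a prescribed length, so a naive attempt to bound $L$ two-sidedly while keeping the full equation on cannot succeed. The paper sidesteps this entirely with a rescaling trick (Lemma~\ref{TFL}): on the last quarter of the homotopy it does \emph{not} prove that fixed points stay in $\Omega$. Instead, if some fixed point of $H(t,\cdot)$ hits $\partial\Omega$ for $t\in(\tfrac34,1]$, one rescales $r\mapsto r/\sqrt{p_4(t)}$ to obtain a genuine solution of the original system on the shorter interval $[0,\sqrt{p_4(t)}]$; otherwise the degree is still $1$ at $t=1$ and one gets a solution on $[0,1]$. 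Either way there is a solution on $[0,\sqrt q]$ for some $q\in(0,1]$, and diffeomorphism invariance of the Einstein equation transports it to $[0,1]$. This is the key idea you are missing: the argument never establishes compactness for the full problem, it trades a possible loss of compactness for a shortening of the interval, which is harmless for the statement of Theorem~\ref{MRCHEM}.

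A smaller point: your openness step appeals to invertibility of the linearized Dirichlet operator, but you give no argument for it, and the paper's degree-theoretic route needs no such invertibility.
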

To prove this result, we look for $G$-invariant Riemannian metrics on $G/H\times [0,1]$ having the form 
\begin{equation}\label{RMG}
g=h^2(r)dr\otimes dr+g_r,
\end{equation}
where $r$ is the natural parameter running through the interval $[0,1]$, 
$h(r)$ is a smooth positive function on $[0,1]$, and 
$g_r$ is a one-parameter collection of $G$-invariant Riemannian metrics on $G/H$ satisfying 
\begin{align*}
g_r(X,Y)&=\sum_{i=1}^{n} f_i^2(r) Q(pr_{\mathfrak{m}_i} X,pr_{\mathfrak{m}_i}Y)
\end{align*}
for all $X,Y\in \mathfrak{m}$, where $(f_i)_{i=1}^{n}$ is some collection of smooth positive functions on the interval $[0,1]$. Now, Lemma 3.1 of \cite{APC} states that the 
Ricci curvature of such Riemannian metrics is given by 
$Ric(g)=H(r) dr\otimes dr+R_r$, where 
\begin{align*}
H(r)=-\sum_{k=1}^{n}d_k\left(\frac{f_k''}{f_k}-\frac{h'f_k'}{hf_k}\right)
\end{align*}
and
\begin{align*}
&R_r(X,Y)=\\
&\sum_{i=1}^{n}\left(\frac{\beta_i}{2}+\sum_{k,l=1}^{n}\gamma^{l}_{ik}\frac{f_i^4-2f_k^4}{4 f_k^2f_l^2}
-\frac{f_if_i'}{h}\sum_{k=1}^{n}d_k\frac{f_k'}{hf_k}+\frac{f_i'^2}{h^2}-\frac{f_if_i''}{h^2}+\frac{f_i h'f_i}{h^3}\right)Q(pr_{\mathfrak{m}_i} X,pr_{\mathfrak{m}_i}Y).
\end{align*} 
Here, $\beta_i$ and $\gamma_{ik}^{l}$ are constants associated with the choice of scalar product $Q$ and the homogeneous space $G/H$, and
$d_i$ is the dimension of the submodule $\mathfrak{m}_i$. See \cite{GroveZiller} for the precise definitions of these numbers. We also let $d=\sum_{i=1}^{n}d_i$, and we recall that $d>1$. 
The Einstein equation is diffeomorphism invariant, 
so we can assume that $h$ is constant on $[0,1]$. For the moment, we will assume that $h=1$, in which 
case the Riemannian metric $g$ satisfies \eqref{EE} if and only if 
\begin{equation}\label{EM1}
\sum_{i=1}^{n}d_i\left(
\frac{\beta_i}{2f_i^2}+\sum_{k,l=1}^{n}\gamma^{l}_{ik}\frac{f_i^4-2f_k^4}{4 f_i^2f_k^2f_l^2}-\frac{f_i'}{f_i}\sum_{k=1}^{n}d_k\frac{f_k'}{f_k}+\frac{f_i'^2}{f_i^2}\right)=(d-1)\lambda,
\end{equation}
\begin{equation}\label{EM2}
\frac{\beta_i}{2f_i^2}+\sum_{k,l=1}^{n}\gamma^{l}_{ik}\frac{f_i^4-2f_k^4}{4 f_i^2f_k^2f_l^2}-\frac{f_i'}{f_i}\sum_{k=1}^{n}d_k\frac{f_k'}{f_k}+\frac{f_i'^2}{f_i^2}-\frac{f_i''}{f_i}=\lambda
\end{equation}
for $i=1,\cdots, n$. 
The rest of this paper is devoted to solving equations \eqref{EM1} and \eqref{EM2} subject to the boundary conditions $f_i(0)=a_i$ and $f_i(1)=b_i$. 
Before we do anything else, we state a result which simplifies this task, and is a consequence of the second contracted Bianchi identity.  
Since we impose Hypothesis \ref{IID}, the result follows from Lemma 2.4 of \cite{EschenburgWang}, but we prove it again for completeness. 
\begin{lemma}\label{PIE}
Let $(X_i)_{i=1}^{d}$ be a $Q$-orthonormal basis of $\mathfrak{m}$ adapted to our decomposition \eqref{DC} and let 
$e_{d+1}$ be the vector field $\partial r$. 
Suppose $I$ is a subinterval of $\mathbb{R}$ containing $0$ and $g$ is a $G$-invariant Riemannian metric on $G/H\times I$ having the form \eqref{RMG} with $h=1$. 
Assume that there exists a constant $\lambda$ such that for $i=1,\cdots, d$, 
$Ric(X_i,X_i)=\lambda g(X_i,X_i)$ at $(H,r)\in G/H\times I$ for all $r\in I$. If $Ric(e_{d+1},e_{d+1})=\lambda$ 
at $(H,0)$, then $Ric(e_{d+1},e_{d+1})=\lambda$ at $(H,r)$ for all $r\in I$. 
\end{lemma}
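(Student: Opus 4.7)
The strategy is to apply the second contracted Bianchi identity to convert the pointwise claim at each $r$ into a first-order linear ODE whose only solution with trivial initial data is zero. Set
\[
\phi(r) := Ric(e_{d+1},e_{d+1})\big|_{(H,r)} - \lambda;
\]
by $G$-invariance this is an unambiguous function of $r\in I$, and we must show $\phi\equiv 0$ given $\phi(0)=0$. From the formulas recalled in the excerpt, $Ric(g) = H(r)\,dr\otimes dr + R_r$ has no mixed $r$–spatial components, and by Hypothesis \ref{IID} the spatial part $R_r$ is diagonal in the decomposition \eqref{DC}. The assumption $Ric(X_i,X_i)=\lambda\, g(X_i,X_i)$ for $i=1,\dots,d$ is therefore equivalent to the tensorial identity $R_r = \lambda g_r$, so the only potentially nonzero component of $Ric - \lambda g$ is the $(r,r)$-entry $\phi(r)$, and the scalar curvature reduces to $s(r) = d\lambda + H(r) = (d+1)\lambda + \phi(r)$.

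The next step is to substitute these simplifications into the contracted Bianchi identity $\nabla^{a}(Ric_{ab} - \tfrac{1}{2} s\, g_{ab}) = 0$ evaluated in the $r$-direction, which reads
\[
\nabla^{a}Ric_{ar} \;=\; \tfrac{1}{2}\,\partial_r s \;=\; \tfrac{1}{2}\,\phi'(r).
\]
To evaluate the left-hand side I would use the standard Christoffel symbols of the warped product $dr^2 + g_r$ (which encode the second fundamental form of the slices $G/H\times\{r\}$) together with the spatial Christoffels of $g_r$. Because the spatial block of $Ric$ has already been replaced by $\lambda g_r$, most of the derivative terms collapse, and bookkeeping the remaining contributions should yield an identity of the shape
\[
\phi'(r) + 2\sum_{i=1}^{n} d_i\,\frac{f_i'(r)}{f_i(r)}\,\phi(r) \;=\; 0.
\]

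This is a homogeneous linear first-order ODE with smooth coefficients on $I$, so $\phi(0)=0$ forces $\phi\equiv 0$, which is exactly the lemma. The main obstacle I anticipate is the careful execution of the divergence computation, in particular verifying the cancellations between the extrinsic-curvature contributions and the derivatives of the spatial Ricci; a clean conceptual packaging is that the $(r,r)$-component of the Einstein equation is a Hamiltonian-type constraint whose propagation along $r$ is guaranteed by the contracted Bianchi identity once the "spatial evolution" equations $R_r=\lambda g_r$ are imposed, so that checking it at a single initial $r$ suffices.
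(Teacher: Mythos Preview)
Your proposal is correct and follows essentially the same route as the paper: both arguments feed the hypothesis $R_r=\lambda g_r$ into the second contracted Bianchi identity to obtain a homogeneous linear first-order ODE for $\phi(r)=Ric(e_{d+1},e_{d+1})-\lambda$, and then conclude by uniqueness with the initial condition $\phi(0)=0$. The paper carries out the divergence computation explicitly in a $g$-orthonormal frame $e_i=X_i/f_j$, arriving at $\tfrac{1}{2}\phi'=-\big(\sum_i g(\nabla_{e_i}e_{d+1},e_i)\big)\phi$, which, since $g(\nabla_{e_i}e_{d+1},e_i)=f_j'/f_j$, is exactly your equation $\phi'+2\sum_j d_j\,(f_j'/f_j)\,\phi=0$.
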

\begin{proof}
Let $e_i=\frac{X_i}{f_j(r)}$ at each $r\in I$, where $j$ is chosen so that $\mathfrak{m}_j$ contains $X_i$. 
Then $(e_i)_{i=1}^{d+1}$ is an orthonormal basis for $g$ at $(H,r)\in G/H\times [0,1]$ for each $r\in [0,1]$, 
and we have $Ric(g)(e_i,e_i)=\lambda g(e_i,e_i)=\lambda$ unless $i=d+1$. 
We extend 
this orthonormal basis to a local orthonormal basis of vector fields so that $(e_i)_{i=1}^{d}$ is a collection of vector fields on $G/H$. 
Using the second contracted Bianchi identity, as well as the fact that 
$Ric(g)(e_i,e_i)$ is constant for $i=1,\cdots, d$, we then find that along $(H,r)\in G/H\times I$ 
\begin{align*}
\frac{1}{2}e_{d+1}Ric(g)(e_{d+1},e_{d+1})&=\frac{1}{2}e_{d+1}\left(\sum_{i=1}^{d+1}Ric(g)(e_i,e_i)\right)\\
&=\frac{1}{2}e_{d+1}(S(g))\\
&=\sum_{i=1}^{d+1}\nabla_{e_i}Ric(g)(e_i,e_{d+1})\\
&=e_{d+1}(Ric(g)(e_{d+1},e_{d+1}))\\
&-\left(\sum_{i=1}^{d+1}Ric(g)(\nabla_{e_i}e_i,e_{d+1})+Ric(g)(\nabla_{e_i}e_{d+1},e_i)\right),
\end{align*}
where $S(g)$ is the scalar curvature of $g$. 
Rearranging and using the Koszul formula, we see that 
\begin{align*}
\frac{1}{2}e_{d+1}Ric(g)(e_{d+1},e_{d+1})&=\sum_{i=1}^{d+1}g(\nabla_{e_i}e_i,e_{d+1})Ric(g)(e_{d+1},e_{d+1})\\
&+\sum_{i=1}^{d+1}g(\nabla_{e_i}e_{d+1},e_i)Ric(g)(e_i,e_i)\\
&=\sum_{i=1}^{d}\left(g(\nabla_{e_i}e_i,e_{d+1})Ric(g)(e_{d+1},e_{d+1})+\lambda g(\nabla_{e_i}e_{d+1},e_i)\right)\\
&=\sum_{i=1}^{d}\left(-g(\nabla_{e_i}e_{d+1},e_i)Ric(g)(e_{d+1},e_{d+1})+\lambda g(\nabla_{e_i}e_{d+1},e_i)\right)\\
&=\left(\sum_{i=1}^{d}g(\nabla_{e_i}e_{d+1},e_i)\right)(-Ric(g)(e_{d+1},e_{d+1})+\lambda).
\end{align*}
Letting $y(r)=Ric(g)(e_{d+1},e_{d+1})$, we see that 
\begin{align*}
\frac{1}{2}y'(r)=\left(\sum_{i=1}^{d}g(\nabla_{e_i}e_{d+1},e_i)\right)(\lambda-y(r)).
\end{align*}
Since $Ric(g)(e_{d+1},e_{d+1})=\lambda$ at $(H,0)$, we know that $y(0)=\lambda$. Basic ODE theory then implies that $y(r)=Ric(g)(e_{d+1},e_{d+1})=\lambda$ for all $r\in I$. 
\end{proof}
Lemma \ref{PIE} implies that to find solutions to \eqref{EM1} and \eqref{EM2}, it suffices to find a solution of 
\begin{equation}\label{EM3}
\sum_{i=1}^{n}d_i\left(
\frac{\beta_i}{2f_i^2}+\sum_{k,l=1}^{n}\gamma^{l}_{ik}\frac{f_i^4-2f_k^4}{4 f_i^2f_k^2f_l^2}-\frac{f_i'}{f_i}\sum_{k=1}^{n}d_k\frac{f_k'}{f_k}
+\frac{f_i'^2}{f_i^2}\right)\biggr\rvert_{r=0}=(d-1)\lambda
\end{equation}
and
\begin{equation}\label{EM4}
\frac{\beta_i}{2f_i^2}+\sum_{k,l=1}^{n}\gamma^{l}_{ik}\frac{f_i^4-2f_k^4}{4 f_i^2f_k^2f_l^2}-\frac{f_i'}{f_i}\sum_{k=1}^{n}d_k\frac{f_k'}{f_k}+\frac{f_i'^2}{f_i^2}-\frac{f_i''}{f_i}=\lambda
\end{equation}
for $i=1,\cdots, n$.
We will study the Einstein equation in this form because \eqref{EM3} is an equation in $\mathbb{R}$ for the real parameter $\lambda$ and \eqref{EM4} is an equation in 
$C^2([0,1];\mathbb{R}^n)$ for the vector function $f\in  C^2([0,1];\mathbb{R}^n)$. 
\section{Torus}
In this section, we consider the situation that our homogeneous space $G/H$ is the $d$-dimensional torus $\mathbb{T}^d$. In this case, $\mathfrak{m}$ is completely reducible into one-dimensional modules. 
Therefore, Hypothesis~\ref{IID} is violated, but solutions to 
\eqref{EM1} and \eqref{EM2} still define Einstein metrics as long as 
$\beta_i=0$ for $i=1,\cdots,d$, $\gamma_{ik}^{l}=0$ for all $i,k,l=1,\cdots,d$ and $d_i=1$ for $i=1,\cdots, d$. 
This is evident from the discussion in Section 1 of \cite{GroveZiller}, for example. 
The main result of this section is the following, and it will
help us prove Theorem \ref{MRCHEM}. 
\begin{theorem}\label{MRT}
 Let $d_i=1$, $\gamma_{ik}^{l}=\beta_i=0$. There is a unique pair $(\lambda,f)=(\lambda,f_1,\cdots,f_d)
 \in \mathbb{R}\times C^2([0,1]; (\mathbb{R}^{+})^d)$ solving \eqref{EM3} and \eqref{EM4} such that 
 $f_i(0)=a_i$ and $f_i(1)=b_i$ for each $i=1,\cdots,d$. 
\end{theorem}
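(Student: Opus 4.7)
The plan is to use the algebraic structure of the torus equations to reduce the problem to a single scalar transcendental equation in $\lambda$, and then establish that this equation has a unique solution via a monotonicity argument.

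\textbf{Step 1: Algebraic reduction.} Set $u_i = \log f_i$, $U = \sum_{i=1}^d u_i$, and $F = e^U = \prod_i f_i$. The torus simplifications ($d_i = 1$, $\beta_i = \gamma_{ik}^l = 0$) reduce \eqref{EM4} to $u_i'' + u_i' U' = -\lambda$ for all $i$, while \eqref{EM3} becomes the Hamiltonian constraint $\sum_i (u_i')^2 - (U')^2 = (d-1)\lambda$ at $r = 0$. Summing the individual equations yields the linear second-order ODE
\[ F'' + d\lambda F = 0, \quad F(0) = A := \textstyle\prod_i a_i, \quad F(1) = B := \textstyle\prod_i b_i. \]
Writing $u_i = U/d + w_i$ with $\sum w_i = 0$, one finds $(w_i' F)' = 0$, so $w_i' = \tilde c_i / F$ for constants $\tilde c_i$ summing to zero. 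The end conditions then force $\tilde c_i = D_i / I(\lambda)$, where $D_i := \log(b_i/a_i) - (1/d)\log(B/A)$ and $I(\lambda) := \int_0^1 F^{-1}\,dr$.

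\textbf{Step 2: Reduction to a scalar equation.} For each $\lambda$ in the admissible range $(-\infty, \pi^2/d)$ (on which the BVP for $F$ has a positive solution on $[0,1]$; positivity for $\lambda < 0$ follows from convexity of $F$), the construction above determines $F$, the $w_i$, and hence the $f_i = F^{1/d} e^{w_i}$ uniquely, and these $f_i$ automatically satisfy \eqref{EM4} together with the Dirichlet conditions. The only remaining requirement is \eqref{EM3}. Using the conservation law $(F')^2 + d\lambda F^2 \equiv E(\lambda)$ along solutions of $F'' + d\lambda F = 0$, together with the identity $u_i'(0) = U'(0)/d + \tilde c_i/A$, the constraint \eqref{EM3} reduces to the single scalar equation
\[ dS = (d-1)\, E(\lambda)\, I(\lambda)^2, \qquad S := \sum_{i=1}^d D_i^2 \ge 0. \]

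\textbf{Step 3: Solving the scalar equation.} It now suffices to show that $\Phi(\lambda) := E(\lambda) I(\lambda)^2$ assumes each value in $[0,\infty)$ exactly once on the admissible range. Using the explicit formula $F(r) = [A\sin(\omega(1-r)) + B\sin(\omega r)]/\sin\omega$ with $\omega = \sqrt{d\lambda}$ for $\lambda > 0$ (and its hyperbolic counterpart for $\lambda < 0$), a direct calculation reveals $\Phi(\lambda) = \psi(\lambda)^2$ for a non-negative quantity $\psi$ which vanishes precisely at $\lambda_0 := -(\log(B/A))^2/d \le 0$ (where $E = 0$, and at which the explicit solution is $f_i(r) = a_i (b_i/a_i)^r$) and which satisfies $\psi(\lambda) \to \infty$ as $\lambda \to (\pi^2/d)^-$. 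The crux is to prove strict monotonicity of $\psi$ on $[\lambda_0, \pi^2/d)$; an explicit differentiation yields
\[ \frac{d\psi}{d\omega} = \frac{(A+B)\tan(\omega/2)}{\sqrt{A^2 + B^2 - 2AB\cos\omega}} > 0 \]
for $\lambda > 0$, together with a positive hyperbolic analog for $\lambda < 0$, and continuity at $\lambda = 0$. The intermediate value theorem then produces the unique $\lambda$ with $\psi(\lambda)^2 = dS/(d-1)$, completing the proof. The main obstacle is the monotonicity identity above, whose derivation requires careful case analysis based on the signs of $\lambda$ and of $B - A\cos\omega$ (and the hyperbolic counterpart), including handling the phase variable $\phi = \arctan(A\sin\omega/(B-A\cos\omega))$.
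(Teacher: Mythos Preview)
Your argument is correct and follows the same overall architecture as the paper: first solve the ``trace'' part of the system for each fixed $\lambda$, then the trace-free part, and finally reduce the initial constraint \eqref{EM3} to a single scalar equation in $\lambda$ whose left-hand side is shown to be continuous, vanishing at $\lambda_0=-(\log(B/A))^2/d$, blowing up at $\pi^2/d$, and strictly increasing in between. Your function $\psi(\lambda)=\sqrt{E(\lambda)}\,I(\lambda)$ is exactly the paper's function $m(\lambda)$, and your scalar equation $dS=(d-1)\psi^2$ is Lemma~\ref{LFL}.

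The genuine difference is in how the trace equation is handled. The paper works with $\operatorname{tr}(L)=(\log F)'$, which satisfies the Riccati equation \eqref{TLL}, and then solves it case by case in $\lambda$; this produces the somewhat unwieldy formulas in \eqref{DOL} and in the definition of $m$. You instead pass to $F=\prod_i f_i$, observe that $F''+d\lambda F=0$ is \emph{linear}, and write everything in terms of the conserved energy $E=(F')^2+d\lambda F^2$ and the integral $I=\int_0^1 F^{-1}$. This is the standard Riccati-to-linear substitution, but it pays off: the amplitude--phase representation $F=R\sin(\omega r+\phi)/\sin\omega$ gives $\psi=\log\bigl[\tan((\omega+\phi)/2)/\tan(\phi/2)\bigr]$ in closed form, from which your derivative identity
\[
\frac{d\psi}{d\omega}=\frac{(A+B)\tan(\omega/2)}{\sqrt{A^2+B^2-2AB\cos\omega}}>0
\]
drops out cleanly. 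The paper, by contrast, declares the monotonicity computation ``tedious'' and omits it entirely. One small remark: your appeal to ``convexity of $F$'' for positivity when $\lambda<0$ is really a maximum-principle argument (an interior non-positive minimum would force $F\equiv 0$), and you should state explicitly that $E(\lambda)<0$ for $\lambda<\lambda_0$ so that no solutions of the scalar equation lie there---this is the content of the paper's Lemma~\ref{LMBI}.
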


To prove this theorem, we introduce the diagonal matrix $L$ with diagonal entries $L_i=\frac{f_i'}{f_i}$. 
We see that solving \eqref{EM3} and \eqref{EM4} is equivalent to solving 
\begin{equation}\label{TL}
(d-1)\lambda=\left(tr(L^2)-tr(L)^2\right)\vert_{r=0}
\end{equation}
and 
\begin{equation}\label{TM}
L'=-tr(L)L-\lambda.
\end{equation}
Once we solve \eqref{TL} and \eqref{TM} for $L$, we see from the definition of 
$L$ that $f_i(r)=s_ie^{\int_{0}^{r}L_i}$ for some positive scaling constants $s_i$. 
By evaluating $f_i(r)$ at $r=0$ and $r=1$ and taking the ratio of the two, 
we can see that these constants $s_i$ can be chosen so that $f_i(0)=a_i$ and $f_i(1)=b_i$ if and only if 
\begin{align}\label{ICT}
\int_{0}^{1}L_i=D_i, 
\end{align}
 where $D_i=\ln(b_i)-\ln(a_i)$ and we will define $D=\sum_{i=1}^{d}D_i$. 
 This shows us that proving Theorem~\ref{MRT} is equivalent to uniquely solving \eqref{TL}, \eqref{TM} and \eqref{ICT} for any constants $D_i\in \mathbb{R}$. 
To do this, we first solve \eqref{TM} and \eqref{ICT} in terms of $\lambda$ and then show that $\lambda$ can be uniquely chosen so that \eqref{TL} holds. 
  
\begin{lemma}\label{SWL}
 Let $\lambda$, $D_i$ be constants and let $D=\sum_{i=1}^{d}D_i$. There exists a solution $L$ of \eqref{TM} and \eqref{ICT} if and only if 
 $\lambda<\frac{\pi^2}{d}$. The solution is unique.  
\end{lemma}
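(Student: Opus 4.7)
The plan is to diagonalise the system \eqref{TM} via a Riccati-type substitution, reducing \eqref{TM}--\eqref{ICT} to a linear two-point boundary value problem on $[0,1]$. Summing \eqref{TM} over $i$ first gives a scalar Riccati equation for $T := tr(L) = \sum_{i=1}^{d} L_i$,
\begin{equation*}
T' = -T^2 - d\lambda.
\end{equation*}
The substitution $T = \mu'/\mu$ normalised by $\mu(0) = 1$ (so $\mu(r) = \exp\int_0^{r} T(s)\,ds$) converts this into the linear equation $\mu'' + d\lambda\,\mu = 0$ with $\mu(0) = 1$ and $\mu'(0) = T(0)$. A direct computation from \eqref{TM} also gives $(L_i - T/d)' = -T(L_i - T/d)$, so
\begin{equation*}
L_i(r) = \frac{T(r)}{d} + \frac{W_i}{\mu(r)}, \qquad \sum_{i=1}^{d} W_i = 0,
\end{equation*}
where $W_i := L_i(0) - T(0)/d$ are constants.

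Next I would translate the integral condition \eqref{ICT}. Using $T = (\ln\mu)'$, integration of the formula above yields
\begin{equation*}
D_i = \frac{\ln\mu(1)}{d} + W_i \int_0^{1} \frac{dr}{\mu(r)}.
\end{equation*}
Summing over $i$ produces the necessary condition $\mu(1) = e^{D}$, after which the $W_i$ are uniquely determined. The lemma therefore reduces to showing that the two-point boundary value problem $\mu'' + d\lambda\,\mu = 0$, $\mu(0) = 1$, $\mu(1) = e^{D}$, $\mu > 0$ on $[0,1]$, admits a unique solution precisely when $\lambda < \pi^2/d$.

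For the existence direction, assume $\lambda < \pi^2/d$ and write $\mu = \mu_c + T(0)\mu_s$ in terms of the fundamental solutions with $\mu_c(0) = 1$, $\mu_c'(0) = 0$ and $\mu_s(0) = 0$, $\mu_s'(0) = 1$. In every subcase one has $\mu_s(1) > 0$ (for $\lambda < 0$, $\mu_s(1) = \sinh(\sqrt{-d\lambda})/\sqrt{-d\lambda}$; for $\lambda = 0$, $\mu_s(1) = 1$; for $0 < \lambda < \pi^2/d$, $\mu_s(1) = \sin(\sqrt{d\lambda})/\sqrt{d\lambda}$ with $\sqrt{d\lambda} \in (0,\pi)$). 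Hence $T(0) \mapsto \mu(1)$ is a strictly increasing bijection of $\mathbb{R}$, and a unique $T(0)$ produces $\mu(1) = e^{D}$. To verify positivity of this $\mu$ on $[0,1]$ I would use a Sturm-type observation: for $\lambda > 0$ consecutive zeros of a nontrivial solution are $\pi/\sqrt{d\lambda} > 1$ apart, while for $\lambda \leq 0$ the solution has at most one zero; combined with $\mu(0), \mu(1) > 0$, no zero can occur in between.

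For the converse, when $\lambda \geq \pi^2/d$, I would compare $\mu$ with $v(r) = \sin(\pi r)$. A short computation gives $(\mu v' - \mu' v)' = (d\lambda - \pi^2)\mu v$, which is non-negative on $(0,1)$ whenever $\mu > 0$. Since $(\mu v' - \mu' v)(0) = \pi$ and $(\mu v' - \mu' v)(1) = -\pi\mu(1)$, monotonicity forces $\mu(1) \leq -1$, contradicting positivity. Any $C^2$ solution $L$ of \eqref{TM} produces a positive $\mu = \exp\int T$, so this contradiction rules out solutions. The main technical point is the Wronskian/Sturm comparison that pins down the critical value $\pi^2/d$; everything else is essentially bookkeeping once the Riccati substitution diagonalises the system.
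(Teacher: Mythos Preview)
Your argument is correct and takes a genuinely different route from the paper. The paper proceeds by direct integration: it separates into the three cases $\lambda<0$, $\lambda=0$, $\lambda>0$, writes down explicit closed-form solutions for $tr(L)$ (tanh-type, rational, tangent-type respectively), solves for the integration constant using \eqref{TILT}, and then reads off the necessity of $\lambda<\pi^2/d$ from the requirement that $\cos(C+\sqrt{d\lambda}\,r)$ not vanish on $[0,1]$; the individual $L_i$ are then obtained by variation of parameters against this explicit $tr(L)$.

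Your Riccati substitution $T=\mu'/\mu$ linearises the problem in one stroke and replaces the case analysis by a single linear two-point boundary value problem together with a Sturm/Wronskian comparison against $\sin(\pi r)$. This is more conceptual and makes the critical value $\pi^2/d$ transparent as the first Dirichlet eigenvalue of $-\mu''=d\lambda\,\mu$ on $[0,1]$. The paper's explicit formulas, on the other hand, are not wasted: they are reused verbatim in the subsequent Lemmas~\ref{LMBI}--\ref{P3DC} to define and analyse the function $m(\lambda)$ and to obtain uniform bounds in the $D_i$. If you continue with your approach you will eventually have to recover equivalent explicit information (or reprove those later lemmas in a correspondingly linearised form), but for Lemma~\ref{SWL} itself your proof is both shorter and cleaner.
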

\begin{proof} 
Notice that whenever \eqref{TM} and \eqref{ICT} hold, 
we must have 
\begin{equation}\label{TLL}
tr(L')=-tr(L)^2-d\lambda
\end{equation}
and 
\begin{equation}\label{TILT}
\int_{0}^{1}tr(L)=D.
\end{equation}
We consider separately the cases that $\lambda$ is negative, positive and $0$. In each case, we show that unless $\lambda\ge \frac{\pi^2}{d}$, 
we can uniquely solve \eqref{TLL} and \eqref{TILT}. 
We then use the expression for $tr(L)$ to uniquely solve \eqref{TM} and \eqref{ICT}. In the case that $\lambda$ is positive, we show that 
we need $\lambda<\frac{\pi^2}{d}$ for a solution to exist. This will complete the proof.
 
\vspace{0.5cm}

\noindent \textbf{First} \textbf{Case}. We assume $\lambda<0$, and set $\mu=-\lambda>0$. The general solution of \eqref{TLL} is given by 
\begin{equation}\label{NLNC}
tr(L)=\sqrt{d\mu}\frac{e^{2\sqrt{d\mu}r}-C}{e^{2\sqrt{d\mu}r}+C} 
\end{equation}
 for some 
constant $C$, or $tr(L)=-\sqrt{d\mu}$.
We see that $tr(L)=-\sqrt{d\mu}$ satisfies \eqref{TILT} if and only if $D=-\sqrt{d\mu}$. Otherwise,
$tr(L)$ must be given by \eqref{NLNC}, and we note that for this expression to be defined and continuous in $r$ on $[0,1]$, we need $C\notin [-e^{2\sqrt{d\mu}},-1]$. 
Furthermore,
\eqref{TILT} holds if and only if the constant $C$ is chosen so that 
\begin{align*}
D&=\int_{0}^{1}tr(L)\\
&=\left(\ln(|C+e^{2\sqrt{d\mu}r}|)-\sqrt{d\mu}r\right)\vert_{r=0}^{r=1}\\
&=\ln(|C+e^{2\sqrt{d\mu}}|)-\sqrt{d\mu}-\ln(\left|C+1\right|).
\end{align*}
This is equivalent to 
\begin{align}\label{MCE}
 \left|\frac{C+e^{2\sqrt{d\mu}}}{C+1}\right|&=e^{D+\sqrt{d\mu}}.
\end{align}
Since $C\notin [-e^{2\sqrt{d\mu}},-1]$, we notice that 
$\left|\frac{C+e^{2\sqrt{d\mu}}}{C+1}\right|=\frac{C+e^{2\sqrt{d\mu}}}{C+1}$, and since $D+\sqrt{n\mu}\neq 0$, we can uniquely solve \eqref{MCE} for $C$ with 
\begin{align*}
 C&=\frac{e^{2\sqrt{d\mu}}- e^{D+\sqrt{d\mu}}}{ e^{D+\sqrt{d\mu}}-1},
\end{align*}
and we note that $C\notin [-e^{2\sqrt{d\mu}},-1]$ does indeed hold. 
Now that we have solved \eqref{TLL} and \eqref{TILT}, we will solve \eqref{TM} and \eqref{ICT}. 
From \eqref{TM} we find that
\begin{align*}
 L_i&=\frac{\mu(e^{2\sqrt{d\mu r}}-C)}{\sqrt{d\mu}(C+e^{2\sqrt{d\mu}r})}
 +\frac{c_i\sqrt{d\mu}e^{\sqrt{d\mu}r}}{C+e^{2\sqrt{d\mu}r}}\\
 &=\frac{1}{d}tr(L)+\frac{c_i\sqrt{d\mu}e^{\sqrt{d\mu}r}}{C+e^{2\sqrt{d\mu}r}}
\end{align*}
for some constants $c_i$, provided we are not at the stationary solution $tr(L)=-\sqrt{d\mu}$. 
The constants $c_i$ can be found from \eqref{ICT} after integrating, and we see 
\begin{equation}\label{CI}
 D_i-\frac{D}{d}=c_i\int_{0}^{1}\frac{\sqrt{d\mu}e^{\sqrt{d\mu}r}}{C+e^{2\sqrt{d\mu}r}}.
\end{equation}
On the other hand, if we are at the stationary solution $tr(L)=-\sqrt{d\mu}$, then the solution of \eqref{TM} 
is given by $L_i=\frac{1}{d}tr(L)+c_i\sqrt{d\mu}e^{\sqrt{d\mu}r}$ and the constants $c_i$ are chosen so that 
\begin{equation}\label{CICP}
 D_i-\frac{D}{d}=c_i\int_{0}^{1}\sqrt{d\mu}e^{\sqrt{d\mu}r}.
\end{equation}

\vspace{0.5cm}

\noindent \textbf{Second} \textbf{Case}. Now we assume $\lambda>0$, in which case \eqref{TLL} implies that 
\begin{equation}\label{PLTE}
tr(L)=-\sqrt{d\lambda}\tan(C+\sqrt{d\lambda}r),
\end{equation}
for some constant $C$. For $tr(L)$ to be defined on $[0,1]$, we require that $\cos(C+\sqrt{d\lambda}r)$ does not change sign. This tells us that we need $\sqrt{d\lambda}<\pi$, which we assume is the case from now on. We can also add $\pi$ to $C$ if necessary to ensure that $\cos(C+\sqrt{d\lambda}r)$ is positive for all $r\in [0,1]$, because such a change in $C$ does not change the value of $tr(L)$. 
 
Equation \eqref{TILT} then implies that 
\begin{align*}
D=\ln \cos(C+\sqrt{d\lambda})-\ln \cos(C),
\end{align*}
 so we can see that $C$ is chosen such that 
\begin{align*}
e^{D}&=\frac{\cos(\sqrt{d\lambda}+C)}{\cos(C)}\\
&=\cos(\sqrt{d\lambda})-\tan(C)\sin(\sqrt{d\lambda}).
\end{align*}
Rearranging, we see that 
\begin{align*}
C&=\arctan\left(\frac{\cos(\sqrt{d\lambda})-e^{D}}{\sin(\sqrt{d\lambda})}\right)\in [-\frac{\pi}{2},\frac{\pi}{2}],
\end{align*}
and it is straightforward to show that given this choice of $C$, $\cos(C+\sqrt{d\lambda}r)$ is indeed positive for $r\in [0,1]$. 
Using \eqref{PLTE}, we can now solve equation \eqref{TM} to find that 
\begin{align*}
L_i&=c_i\sec(C+\sqrt{d\lambda}r)+\frac{1}{d}tr(L)
\end{align*}
for some constants $c_i$.
Equation \eqref{ICT} then implies that the $c_i$ constants must be chosen so that
\begin{align*}
D_i-\frac{D}{d}&=\frac{c_i}{\sqrt{d\lambda}}\ln(\tan(C+\sqrt{d\lambda}r)+\sec(C+\sqrt{d\lambda}r))\vert_{r=0}^{r=1},
\end{align*}
and we note that the input of the logarithm is indeed positive. 

\vspace{0.5cm}

\noindent \textbf{Third} \textbf{Case}. Finally, if $\lambda=0$, we split into two further cases: $D\neq 0$ and $D=0$. If $D\neq 0$, \eqref{TLL} and \eqref{TILT} implies that 
$tr(L)=\frac{1}{C+r}$ for some constant $C\notin[-1,0]$ chosen so that $\ln\left(\frac{C+1}{C}\right)=D$. Rearranging gives
$\frac{1}{e^D-1}=C$ which makes sense as $D\neq 0$. 
Then \eqref{TM} implies that $L_i=\frac{tr(L)}{d}+\frac{c_i}{C+r}$ and \eqref{ICT} implies that the $c_i$ terms are uniquely chosen so that $D_i-\frac{D}{d}=c_i D$. 

If $D=0$, then \eqref{TLL} combined with \eqref{TILT} implies that 
$tr(L)=0$. Equations \eqref{TM} and \eqref{ICT} then imply that $L_i=D_i$. 
\end{proof}

For a given $\lambda<\frac{\pi^2}{d}$, Lemma \ref{SWL} implies that a solution of \eqref{TM} and \eqref{ICT} exists and is unique. 
As found in the proof of Lemma \ref{SWL}, the solution is 
\begin{align}\label{DOL}
\begin{split}
L_i(r)=
 \begin{cases}
  \vspace{0.2cm}
  \frac{\mu(e^{2\sqrt{d\mu }r}-C_-)}{\sqrt{d\mu}(C_-+e^{2\sqrt{d\mu}r})}
 +\frac{c_{-i}\sqrt{d\mu}e^{\sqrt{d\mu}r}}{C_-+e^{2\sqrt{d\mu}r}}&  \text{if} -\lambda=\mu>0 \ \text{and} \ \sqrt{d\mu}\neq -D,\\ \vspace{0.2cm}
 \frac{-\sqrt{d\mu}}{d}+\tilde{c}_{-i}\sqrt{d\mu}e^{\sqrt{d\mu}r} &  \text{if} -\lambda=\mu>0 \ \text{and} \ \sqrt{d\mu}= -D,\\ \vspace{0.2cm}
 \frac{1}{d(C_0 +r)}+\frac{c_{0i}}{C_0+r}&  \text{if} \ \lambda=0 \ \text{and} \ D\neq 0, \\ \vspace{0.2cm}
 D_i& \text{if} \ \lambda=0 \ \text{and} \ D= 0, \\ \vspace{0.2cm}
 c_{+i}\sec(C_{+}+\sqrt{d\lambda}r)-\frac{\sqrt{d\lambda}}{d}\tan({C_+} +\sqrt{d\lambda}r)& \text{if} \ 0<\lambda<\frac{\pi^2}{d},
 \end{cases}
 \end{split}
\end{align}
where 
\begin{align*}
 C_-=\frac{e^{2\sqrt{d\mu}}- e^{(D+\sqrt{d\mu})}}{ e^{(D+\sqrt{d\mu})}-1},\  
  c_{-i}&=\frac{D_i-\frac{D}{d}}{\int_{0}^{1}\frac{\sqrt{d\mu}e^{\sqrt{d\mu}r}}{C_-+e^{2\sqrt{d\mu}r}}},\ 
 \tilde{c}_{-i}=\frac{D_i-\frac{D}{d}}{\int_{0}^{1}\sqrt{d\mu}e^{\sqrt{d\mu}r}},\\
 C_0=\frac{1}{e^D-1},\ & 
 c_{0i}=\frac{D_i-\frac{D}{d}}{D},\\
 C_+=\arctan\left(\frac{\cos(\sqrt{d\lambda})-e^{D}}{\sin(\sqrt{d\lambda})}\right),\ &  
 c_{+i}=\frac{\sqrt{d\lambda}(D_i-\frac{D}{d})}{\ln(\tan(C_++\sqrt{d\lambda}r)+\sec(C_++\sqrt{d\lambda}r))\vert_{r=0}^{r=1}}.
\end{align*}

We will put these solutions of \eqref{TM} and \eqref{ICT} into \eqref{TL} to find the value of $\lambda$. 
The following lemma imposes some initial constraints on the possible values of $\lambda$.

\begin{lemma}\label{LMBI}
 If the unique solution of \eqref{TM} and \eqref{ICT} satisfies
 \eqref{TL}, then $\lambda\in [\frac{-D^2}{d},\frac{\pi^2}{d})$.
\end{lemma}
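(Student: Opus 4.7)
The plan is to prove the two bounds separately. The upper bound $\lambda<\pi^2/d$ is essentially immediate from Lemma \ref{SWL}: that lemma already shows that \eqref{TM} and \eqref{ICT} admit a solution only when $\lambda<\pi^2/d$, so any $\lambda$ for which the combined system has a solution automatically lies in this range.

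For the lower bound, the key observation is that the quantity $W(r):=tr(L^2)(r)-tr(L)^2(r)$ satisfies a particularly clean ODE. Multiplying \eqref{TM} componentwise by $2L_i$ and summing yields $(tr(L^2))'=-2tr(L)\,tr(L^2)-2\lambda\,tr(L)$, while differentiating $tr(L)^2$ and using \eqref{TLL} gives $(tr(L)^2)'=-2tr(L)^3-2d\lambda\,tr(L)$. Subtracting these produces
\[
W'=-2tr(L)\bigl(W-(d-1)\lambda\bigr).
\]
Since \eqref{TL} is precisely the statement $W(0)=(d-1)\lambda$, uniqueness of solutions to this linear scalar ODE forces $W(r)=(d-1)\lambda$ for every $r\in[0,1]$. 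This is the torus analogue of Lemma \ref{PIE}, and it is what turns the single-point condition \eqref{TL} into a pointwise identity.

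Once this is established, the lower bound follows from Cauchy--Schwarz. If $\lambda\ge 0$ then $\lambda\ge -D^2/d$ trivially, so assume $\lambda<0$. The inequality $tr(L)^2\le d\cdot tr(L^2)$ gives $W\ge -\tfrac{d-1}{d}tr(L)^2$ pointwise, so combining with $W\equiv (d-1)\lambda$ one obtains $tr(L)^2(r)\ge -d\lambda>0$ on all of $[0,1]$. Hence $tr(L)$ is nowhere vanishing and has constant sign, so
\[
|D|=\left|\int_0^1 tr(L)\right|=\int_0^1 |tr(L)|\ge \sqrt{-d\lambda},
\]
which rearranges to $\lambda\ge -D^2/d$.

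The main obstacle to spot is that the single scalar condition \eqref{TL} at $r=0$ is in fact equivalent to a pointwise identity on $[0,1]$; without this, one would be trying to bound $tr(L)(0)$ directly from the integral constraint $\int_0^1 tr(L)=D$, which does not seem feasible. Once the ODE for $W$ is identified, the remainder is a short Cauchy--Schwarz argument.
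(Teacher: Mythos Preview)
Your proof is correct and takes a genuinely different route from the paper. The paper argues the lower bound by inserting the explicit closed-form solutions from Lemma~\ref{SWL} (in particular the constant $C_-$ and the coefficients $c_{-i}$) into \eqref{TL}, which after simplification yields $4(\tfrac{1}{d}-1)C_-=\sum_i c_{-i}^2$; when $\lambda<-D^2/d$ one has $C_->0$, so the left side is negative while the right side is nonnegative, a contradiction.

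Your approach avoids the explicit formulas entirely. By deriving the linear ODE $W'=-2\,tr(L)\bigl(W-(d-1)\lambda\bigr)$ and using \eqref{TL} as an initial condition, you promote the single-point constraint to the pointwise identity $tr(L^2)-tr(L)^2\equiv(d-1)\lambda$, which is exactly the torus analogue of Lemma~\ref{PIE} as you note. The lower bound then falls out of Cauchy--Schwarz and the integral constraint \eqref{TILT}. This is more conceptual and more robust, and it makes transparent \emph{why} $-D^2/d$ is the right threshold: it is simply the Cauchy--Schwarz bound on $\int_0^1 tr(L)$ given $tr(L)^2\ge -d\lambda$. The paper's approach, on the other hand, is a natural byproduct of the explicit solution formulas that are in any case needed for the subsequent Lemmas~\ref{LFL} and~\ref{FIS}, so in context it costs nothing extra.
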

\begin{proof}
We already know that $\lambda<\frac{\pi^2}{d}$ for a solution of \eqref{TM} to even exist. Now, if $\lambda=-\mu < -\frac{D^2}{d}$, then 
 $C_->0$. 
Substituting our solution of \eqref{TM} and \eqref{ICT} into \eqref{TL} yields 
$(1-d)\mu=d\mu\frac{(1-C_-)^2}{(1+C_-)^2}(\frac{1}{d}-1)+\frac{\sum_{i=1}^{d}d\mu c_{-i}^2}{(C_-+1)^2}$ which can be rearranged to 
\begin{equation}\label{TL0}
 4(\frac{1}{d}-1)C_-=\sum_{i=1}^{d}c_{-i}^2. 
\end{equation}
This is a contradiction since $C_->0$. 
\end{proof}
Now finding a value for $\lambda\in [-\frac{D^2}{d},\frac{\pi^2}{d})$ such that \eqref{TL} 
is satisfied will involve the function $m:[-\frac{D^2}{d},\frac{\pi^2}{d})\to \mathbb{R}^{+}$ given by 
\begin{align*}
\vspace{0.2cm}m(\lambda)&=
 \begin{cases}
 0& \text{if} \ 0<d\mu=-d\lambda =D^2,\\ \vspace{0.2cm}
  \left|\ln\left(\frac{(\frac{e^{\sqrt{d\mu}}}{\sqrt{-C_-}}+1)(-\frac{1}{\sqrt{-C_-}}+1)}{(1-\frac{e^{\sqrt{d\mu}}}
  {\sqrt{-C_-}})(1+\frac{1}{\sqrt{-C_-}})}\right)\right|&\text{if} \ 0<d\mu=-d\lambda< D^2,\\ \vspace{0.2cm}
   \left|D\right|&\text{if} \ \lambda=0,\\ \vspace{0.2cm}
  \left|\ln\frac{\tan(C_++\sqrt{d\lambda})+\sec(C_++\sqrt{d\lambda})}{\tan(C_+)+\sec(C_+)}\right|&\text{if} \ 0<d\lambda<\pi^2,
 \end{cases}
\end{align*}
where we treat $C_-$ and $C_+$ as functions of $\lambda$. 
The importance of $m$ is demonstrated with the following lemma. 
\begin{lemma}\label{LFL}
The solution of \eqref{TM} and \eqref{ICT} satisfies \eqref{TL} if and only if 
\begin{equation}\label{BEFF}
m(\lambda)=\sqrt{\frac{d}{d-1}\sum_{i=1}^{d}(D_i-\frac{D}{d})^2}. 
\end{equation}
\end{lemma}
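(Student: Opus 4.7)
The plan is to substitute the explicit solutions of \eqref{TM} and \eqref{ICT} from \eqref{DOL} directly into the algebraic constraint \eqref{TL}. The structural observation that makes this tractable is that in every one of the five cases of \eqref{DOL}, each component has the form $L_i(r) = \frac{1}{d}\mathrm{tr}(L)(r) + c_i \phi(r)$ for some scalar function $\phi(r)$ that depends only on $\lambda$ (the same function for every $i$), with $c_i$ playing the role of one of $c_{-i}$, $\tilde{c}_{-i}$, $c_{0i}$, $D_i - D/d$, or $c_{+i}$. Since $\sum_i L_i = \mathrm{tr}(L)$, the constants automatically satisfy $\sum_i c_i = 0$, so the cross term in $\mathrm{tr}(L^2) = \sum_i L_i^2$ drops out and $\mathrm{tr}(L^2) - \mathrm{tr}(L)^2 = \phi^2 \sum_i c_i^2 - \frac{d-1}{d}\mathrm{tr}(L)^2$. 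Equation \eqref{TL} then reduces to $\phi(0)^2 \sum_i c_i^2 = \frac{d-1}{d}\bigl(d\lambda + \mathrm{tr}(L)(0)^2\bigr)$.

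The next step is to eliminate the $c_i$'s in favour of the $D_i$'s. In each case of \eqref{DOL} the formula for $c_i$ coming from \eqref{ICT} has the form $c_i = (D_i - D/d)/I$, where $I := \int_0^1 \phi(r)\,dr$. Substituting gives
\begin{equation*}
\frac{d}{d-1}\sum_i \left(D_i - \frac{D}{d}\right)^2 = \bigl(d\lambda + \mathrm{tr}(L)(0)^2\bigr)\frac{I^2}{\phi(0)^2}.
\end{equation*}
In view of \eqref{BEFF}, it therefore suffices to verify, case by case, that the right-hand side above equals $m(\lambda)^2$. The case $\lambda = 0$ with $D \neq 0$ (where $\phi(r) = 1/(C_0+r)$, $\phi(0) = e^D - 1$, $I = D$) gives $D^2$, matching $|D|^2$. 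The stationary case $\sqrt{d\mu} = -D$ is also immediate: $\mathrm{tr}(L)(0)^2 = d\mu = -d\lambda$, so both sides vanish, which is consistent with $m(\lambda) = 0$ and forces $\tilde{c}_{-i} = 0$, i.e.\ $D_i = D/d$. The positive-$\lambda$ case uses $d\lambda + \mathrm{tr}(L)(0)^2 = d\lambda\sec^2(C_+) = d\lambda\,\phi(0)^2$, together with the standard antiderivative $\int \sec = \ln|\sec+\tan|$, and the identity telescopes to exactly $m(\lambda)^2$.

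The heart of the proof is the negative-$\lambda$ non-stationary case, where $\phi(r) = \sqrt{d\mu}\,e^{\sqrt{d\mu}r}/(C_- + e^{2\sqrt{d\mu}r})$. A short computation shows $d\lambda + \mathrm{tr}(L)(0)^2 = -4C_-\phi(0)^2$, so the identity to verify reads $-4C_- I^2 = m(\lambda)^2$. When $C_- > 0$ (equivalently $d\mu > D^2$, by the sign analysis in Lemma \ref{LMBI}) the left-hand side is negative, so \eqref{TL} is unsolvable --- recovering the lower bound $\lambda \geq -D^2/d$. When $C_- < 0$, the substitution $u = e^{\sqrt{d\mu}r}$ combined with $\int du/(u^2+C_-) = (2\sqrt{-C_-})^{-1}\ln\bigl|(u-\sqrt{-C_-})/(u+\sqrt{-C_-})\bigr|$ gives $I = (2\sqrt{-C_-})^{-1}\cdot[\text{log expression}]$, and forming $-4C_- I^2$ cancels the $\sqrt{-C_-}$ factors, leaving the square of that log expression; it matches the one defining $m(\lambda)$ up to inverting the fraction inside the absolute value. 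The main obstacle is the bookkeeping at this last step: verifying that the logarithmic expression produced by the integral is indeed (up to the ambiguity absorbed by the outer $|\cdot|$) the one in the definition of $m(\lambda)$, and checking that the sign of $C_-$ partitions the negative-$\lambda$ range in exactly the way the piecewise definition of $m$ prescribes.
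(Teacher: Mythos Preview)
Your proof is correct and follows essentially the same route as the paper's own argument: both substitute the explicit solutions \eqref{DOL} into \eqref{TL} and reduce, case by case, to the identity \eqref{BEFF}. The only difference is organisational --- you extract the common structure $L_i=\tfrac{1}{d}\mathrm{tr}(L)+c_i\phi$ once and derive the single reduced equation $\phi(0)^2\sum_i c_i^2=\tfrac{d-1}{d}\bigl(d\lambda+\mathrm{tr}(L)(0)^2\bigr)$, whereas the paper rederives the analogous intermediate identity (for instance $4(\tfrac{1}{d}-1)C_-=\sum_i c_{-i}^2$ in the negative case) separately in each regime --- but the computations and their verification against the definition of $m$ are identical.
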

\begin{proof}
First, if $-\frac{D^2}{d}=\lambda<0$, then $\sqrt{d\mu}=\pm D$, and $L_i(r)$ is given by the second line of \eqref{DOL}, or by the first line with $C_-=0$. 
Substituting these expressions into \eqref{TL} 
gives $\sum_{i=1}^{d}\tilde{c}_{-i}^2=0$ or $\sum_{i=1}^{d}c_{-i}^2=0$. This is equivalent to $\sum_{i=1}^{d}(D_i-\frac{D}{d})^2=0$, which is equivalent to 
$m(\lambda)=0$ as required. 

Next, assume that $-\frac{D^2}{d}< \lambda<0$. Then $C_-<0$, so like in the proof of Lemma \ref{LMBI} we deduce that \eqref{TL} is equivalent to 
\begin{equation}\label{TL01}
 4\left(\frac{1}{d}-1\right)C_-=\sum_{i=1}^{d}c_{-i}^2.
\end{equation}
By substituting our definitions of $C_-$ and $c_{-i}$ into 
\eqref{TL01}, explicitly 
evaluating the integral in the definition of $c_{-i}$ 
and rearranging for $\sum_{i=1}^{d}(D_i-\frac{D}{d})^2$, we find that \eqref{TL01} is equivalent to 
\eqref{BEFF}.  

If $\lambda=0$ and $D=0$, then substituting our solution of \eqref{TM} and \eqref{ICT} into \eqref{TL} gives $\sum_{i=1}^{d}(D_i-\frac{D}{d})^2=0$ as required. 
If $D\neq 0$, and $\lambda=0$, then the same process gives 
$\frac{1}{C_0^2}(\frac{1}{d}-1+\sum_{i=1}^{d}c_{0i}^2)=0$. Since $C_0\neq 0$, 
we can multiply this equation by $C_0^2$ and use our expression for $c_{0i}$ to find that \eqref{TL} is equivalent to \eqref{BEFF}. 

If $\lambda>0$, then \eqref{TL} is equivalent to 
\begin{equation}\label{LPRF}
(d-1)\lambda=(1-d)\lambda\tan^2(C_+)+\sec^2(C_+)\sum_{i=1}^{d}c_{+i}^2. 
\end{equation}
After using our definitions of $C_+$ and $c_{i+}$ and rearranging we see that \eqref{LPRF} is equivalent to \eqref{BEFF}. 
\end{proof}
The combination of Lemmas \ref{SWL}, \ref{LMBI} and \ref{LFL} demonstrates that uniquely solving \eqref{TL}, \eqref{TM} and \eqref{ICT} is 
equivalent to uniquely solving the equation $m(\lambda)=\sqrt{\frac{d}{d-1}\sum_{i=1}^{d}(D_i-\frac{D}{d})^2}$. 
The following two lemmas show us that this is indeed possible, and conclude the proof of Theorem \ref{MRT}. 
\begin{lemma}\label{FIS}
 The function $m:[\frac{-D^2}{d},\frac{\pi^2}{d})\to \mathbb{R}^+$ is surjective.
\end{lemma}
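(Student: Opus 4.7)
The plan is to apply the intermediate value theorem. I will show that $m$ is continuous on $[-D^2/d,\pi^2/d)$, that $m(-D^2/d)=0$, and that $m(\lambda)\to+\infty$ as $\lambda\to (\pi^2/d)^-$. Since $m$ is nonnegative by construction, these three facts together immediately give surjectivity onto $\mathbb{R}^+$.

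Within each branch of the piecewise definition, continuity is clear from continuity of the elementary functions involved, once one checks that no denominators vanish. The needed nonvanishing facts are already implicit in Lemma~\ref{SWL}: the denominator $e^{D+\sqrt{d\mu}}-1$ in $C_-$ is nonzero precisely when we are not at the stationary solution, the quantity $-C_-$ is strictly positive on the second branch (verified case by case on the sign of $D$), and $\sin(\sqrt{d\lambda})>0$ and $\cos(C_++\sqrt{d\lambda}r)>0$ for $r\in[0,1]$ on the fourth branch. The value $m(-D^2/d)=0$ either holds by the defining first branch (when $D\leq 0$, so $\sqrt{d\mu}=-D$) or is a one-sided limit as $C_-\to 0^-$ in the second branch (when $D>0$), in which case $e^{\sqrt{d\mu}}/\sqrt{-C_-}\to\infty$ and $1/\sqrt{-C_-}\to\infty$ dominate, so the argument of the logarithm tends to $1$.

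The hard part will be continuity across the internal branch boundaries, especially at $\lambda=0$ when $D\neq 0$, where the logarithm argument in the second branch takes the form $0/0$ and must be evaluated by Taylor expansion. I would expand $C_-=-1+\frac{2\sqrt{d\mu}}{e^D-1}+O(d\mu)$ and substitute into the four factors appearing in the second branch, matching the $\epsilon:=\sqrt{d\mu}$ orders in the numerator and denominator; the leading-order cancellation yields that the argument tends to $e^{-D}$, so $m(\lambda)\to |D|=m(0)$. The same strategy handles the one-sided limit from the fourth branch: using the half-angle identity $\tan(x)+\sec(x)=\tan(x/2+\pi/4)$ and the derived relation $\sin(\phi+\sqrt{d\lambda})=e^{D}\sin(\phi)$ for $\phi=C_++\pi/2$ simplifies the ratio and again gives $|D|$ in the limit. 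Continuity at $\lambda=-D^2/d$ from the right is handled analogously by expanding in $\sqrt{d\mu}-|D|$.

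For the divergence at the right endpoint, as $\sqrt{d\lambda}\to \pi^-$ we have $\cos(\sqrt{d\lambda})\to -1$ and $\sin(\sqrt{d\lambda})\to 0^+$, so $C_+=\arctan\!\bigl((\cos(\sqrt{d\lambda})-e^D)/\sin(\sqrt{d\lambda})\bigr)\to -\pi/2$ while $C_++\sqrt{d\lambda}\to \pi/2$. Writing $\tan(x)+\sec(x)=(1+\sin(x))/\cos(x)$, one sees that the denominator of the logarithm argument vanishes to first order at $C_+=-\pi/2$ while the numerator blows up at $C_++\sqrt{d\lambda}=\pi/2$, so the argument tends to $+\infty$ and $m(\lambda)\to+\infty$. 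The intermediate value theorem then delivers the required surjection, which completes the proof.
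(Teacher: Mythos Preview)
Your proposal is correct and follows essentially the same approach as the paper: establish continuity of $m$ across the branch points, verify $m(-D^2/d)=0$ and $m(\lambda)\to\infty$ as $\lambda\to(\pi^2/d)^-$, then invoke the intermediate value theorem. The only differences are cosmetic---you use Taylor expansions where the paper uses L'H\^opital, and the half-angle identity where the paper rewrites $\tan+\sec$ as $(1+\sin)/\cos$---and one small misreading: the first branch of $m$ already assigns $m(-D^2/d)=0$ for \emph{every} nonzero $D$ (the condition is $d\mu=D^2$, not $\sqrt{d\mu}=-D$), so no separate one-sided limit is needed there, though your limit computation for the $D>0$ case is still what establishes continuity at that endpoint.
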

\begin{proof}
We demonstrate that the image of $m$ is $\mathbb{R}^{+}$ by computing some limits. 
First we check limits of $m$ on $(-\frac{D^2}{d},0)$, assuming $D\neq 0$. 
As $\mu\to 0$, the quantity $\frac{1-\frac{1}{\sqrt{-C_-}}}{1-\frac{e^{\sqrt{d\mu}}}{\sqrt{-C_-}}}$
converges to $e^{-D}$ by L'H\^opital's rule. Therefore, 
\begin{align*}
 \lim_{\lambda\to 0^{-}}m(\lambda)=\left|D\right|.
\end{align*}
As $\sqrt{d\mu}\to \left|D\right|$, $C_-$ goes to $0$ or $- \infty$ (depending on the sign of $D\neq 0$), so 
\begin{align*}
\lim_{\lambda\to -\frac{D^2}{d}}m(\lambda)=0.
\end{align*}
Now for the limits on $(0,\frac{\pi^2}{d})$. 
We see that 
$$\frac{\tan(C_++\sqrt{d\lambda})+\sec(C_++\sqrt{d\lambda})}{\tan(C_+)+\sec(C_+)}=\frac{\cos(C_+)}{\cos(C_++\sqrt{d\lambda})}\frac{\sin(C_++\sqrt{d\lambda})+1}{\sin(C_+)+1}$$
and 
$\frac{\cos(C_+)}{\cos(C_++\sqrt{d\mu})}=e^{-D}$ by the identity $\frac{\cos(x+y)}{\cos(x)}=\cos(y)-\tan(x)\sin(y)$. 
If $D\le 0$, then $C_+$ stays away from $-\frac{\pi}{2}$ as $\lambda\to 0^+$, so $\lim_{\lambda\to 0^+}\frac{\sin(C_++\sqrt{d\lambda})+1}{\sin(C_+)+1}=1$. On the other hand, if $D>0$, then $C_+\to -\frac{\pi}{2}$ as $\lambda\to 0^+$, so 
by L'H\^opitals rule, 
\begin{align*}
\lim_{\lambda\to 0^+}\frac{\sin(C_++\sqrt{d\lambda})+1}{\sin(C_+)+1}=
\lim_{\lambda\to 0^+}\frac{\cos(C_++\sqrt{d\lambda})}{\cos(C_+)}\left(1+\frac{\sin^2(0)+(\cos(0)-e^{D})^2}{e^{D}-1}\right)=e^{2D}.
\end{align*}
In either case, we can see that 
\begin{align*}
\lim_{\lambda\to 0^+}m(\lambda)=\left|D\right|. 
\end{align*}
It is clear that $m$ is continuous on $(-\frac{D^2}{d},0)$ and $(0,\frac{\pi^2}{d})$. These three limits demonstrate that $m$ is also continuous at $0$ and $-\frac{D^2}{d}$, 
so $m$ is in fact continuous 
on all of $[-\frac{D^2}{d},\frac{\pi^2}{d})$.

To conclude the proof, notice that if $\sqrt{d\lambda}\to \pi$, 
then $C_+\to -\frac{\pi}{2}$, so $\frac{\sin(C_++\sqrt{d\lambda})+1}{\sin(C_+)+1}$ goes 
to $\infty$, and 
\begin{align*}
\lim_{\lambda\to \frac{\pi^2}{d}}m(\lambda)=\infty.  
\end{align*}
Therefore, $m$ is continuous, gets arbitrarily close to $0$ and also gets arbitrarily large.  
The intermediate value theorem then implies that $m$ achieves all values in $\mathbb{R}^{+}$. 
\end{proof}
\begin{lemma}
 The function $m$ is monotone increasing.
\end{lemma}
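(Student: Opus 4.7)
The plan is to establish strict monotonicity of $m$ on each of the open subintervals $(-D^2/d, 0)$ and $(0, \pi^2/d)$ by direct computation of $m'(\lambda)$, and then combine this with the continuity of $m$ on $[-D^2/d, \pi^2/d)$ established in Lemma~\ref{FIS} to obtain strict monotonicity on the entire domain. On each open subinterval the piecewise formula for $m$ is smooth, and the main task is to choose a clean parameterization before differentiating.

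For the positive case $0 < \lambda < \pi^2/d$, I would set $x = \sqrt{d\lambda}$, $\theta_0 = -C_+$, and $\phi = x - \theta_0$, so that the identity defining $C_+$ becomes the relation $\cos\phi = e^{-D}\cos\theta_0$ and the formula for $m$ reduces to $m = \int_{-\phi}^{\theta_0}\sec u\, du$, with the integrand strictly positive since $\theta_0, \phi\in(-\pi/2,\pi/2)$. Implicit differentiation of this relation together with $\theta_0 + \phi = x$ gives $\theta_0' = \sin\phi\cos\theta_0/\sin x$ and $\phi' = \sin\theta_0\cos\phi/\sin x$. Substituting into $dm/dx = \sec\theta_0\cdot\theta_0' + \sec\phi\cdot\phi'$ and applying a product-to-sum identity yields
\begin{equation*}
\frac{dm}{dx} = \frac{\sin\theta_0 + \sin\phi}{\sin x} = \frac{\cos((\theta_0-\phi)/2)}{\cos(x/2)},
\end{equation*}
which is strictly positive because $|\theta_0 - \phi| < \pi$ and $\cos(x/2) > 0$ on $(0,\pi)$.

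The negative case $-D^2/d < \lambda < 0$ is handled analogously by setting $y = \sqrt{-d\lambda}$, $C_- = -e^{2\theta_0}$, and $\phi = \theta_0 - y$. The constraint now reads $\sinh\phi = e^D\sinh\theta_0$, and $m$ takes the integral form $m = \left|\int_\phi^{\theta_0} du/\sinh u\right|$. Implicit differentiation yields $\theta_0' = \cosh\phi\sinh\theta_0/\sinh y$ and $\phi' = \sinh\phi\cosh\theta_0/\sinh y$. In each of the two sub-cases, namely $0 < \phi < \theta_0$ when $D < 0$ and $\phi < \theta_0 < 0$ when $D > 0$, unfolding the absolute value produces $|dm/dy| = |\cosh\theta_0 - \cosh\phi|/\sinh y$ with $dm/dy < 0$, and since $y$ is a decreasing function of $\lambda$ on this subinterval, $m'(\lambda) > 0$ follows.

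The main obstacle is identifying the parameterizations $\cos\phi = e^{-D}\cos\theta_0$ and $\sinh\phi = e^D\sinh\theta_0$, which encode the integral constraint \eqref{ICT} in a form amenable to implicit differentiation; once these are in place, the derivatives collapse to clean closed-form expressions and positivity follows from elementary identities. The sign bookkeeping needed to handle the absolute value in the definition of $m$ and the $D > 0$ versus $D < 0$ dichotomy is routine.
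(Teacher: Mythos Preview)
Your proposal is correct and follows the same approach as the paper, which simply states that one computes $m'(\lambda)$ on $(-D^2/d,\pi^2/d)\setminus\{0\}$ and checks it is positive, omitting the details as tedious. Your trigonometric/hyperbolic reparameterizations $(\theta_0,\phi)$ turn that computation into clean closed forms and make the sign analysis transparent; note only the harmless slip that the constraint in the positive case should read $\cos\phi = e^{D}\cos\theta_0$ rather than $e^{-D}$, which does not affect the implicit-differentiation step or the final expression for $dm/dx$.
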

\begin{proof}
The proof involves computing $m'(\lambda)$ for any $\lambda\in (-\frac{D^2}{d},\frac{\pi^2}{d})\setminus\{0\}$, and demonstrating that it is positive. 
The computation is straightforward, but is also tedious, so it is omitted. 
\end{proof}
Now that we have existence and uniqueness of solutions of \eqref{TL}, \eqref{TM} and \eqref{ICT}, we conclude this section by demonstrating that these solutions behave 
well under changes of values of $D_i$. 
\begin{lemma}\label{P3DC}
 Suppose that $\left|D_i\right|\le R$ for each $i=1,\cdots,d$. Then there exists $R'>0$ depending only on $R$ such that the solution $(\lambda,L_i)$ of 
 \eqref{TL}, \eqref{TM} and \eqref{ICT} satisfies $\left|\lambda\right|<R'$ and $\left|L_i\right|<R'$ for each $i=1,\cdots,d$. 
\end{lemma}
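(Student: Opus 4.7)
The plan is to bound $\lambda$ first, using the explicit characterisation of solutions from Lemmas \ref{LMBI} and \ref{LFL}, and then feed this bound into the formulas \eqref{DOL} to bound each $L_i$.

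For $\lambda$: Lemma \ref{LMBI} gives the lower bound $\lambda \ge -D^2/d \ge -dR^2$. For the upper bound, Lemma \ref{LFL} says $m(\lambda) = \sqrt{\frac{d}{d-1}\sum_{i=1}^{d}(D_i-D/d)^2}$, and since $|D_i - D/d| \le 2R$ the right-hand side is bounded above by a constant depending only on $R$ and $d$. Combining this with the monotonicity of $m$ and the fact that $m(\lambda) \to \infty$ as $\lambda \to (\pi^2/d)^-$ produces $\lambda^* = \lambda^*(R) < \pi^2/d$ with $\lambda \le \lambda^*$. The subtle point is that $\lambda^*$ must be chosen independently of $D$, which in turn requires the divergence $m(\lambda)\to\infty$ to be \emph{uniform} in $D$ on $[-dR, dR]$. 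This uniformity is visible in the $\lambda > 0$ case of the definition of $m$: as $\sqrt{d\lambda}\to\pi^-$, the numerator $\cos(\sqrt{d\lambda}) - e^D$ stays in a compact negative interval while $\sin(\sqrt{d\lambda})\to 0^+$, forcing $C_+\to -\pi/2$ uniformly in $D$ and $m(\lambda) \to \infty$ uniformly.

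With $\lambda$ confined to the compact subinterval $[-dR^2, \lambda^*]\subset (-\infty,\pi^2/d)$, I would then inspect the five cases of \eqref{DOL}. Each of the auxiliary constants $C_\pm$, $C_0$, $c_{\pm i}$, $\tilde{c}_{-i}$, $c_{0i}$ is a continuous function of $(\lambda, D_1, \ldots, D_d)$ on the compact parameter set $[-dR^2, \lambda^*]\times [-R,R]^d$, and the $r$-dependent factors in \eqref{DOL} are bounded uniformly on $r\in [0,1]$. A compactness argument then produces a uniform bound $R'(R)$ on $|L_i(r)|$.

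The principal obstacle is establishing continuity of these auxiliary constants across the boundaries between cases in \eqref{DOL}, in particular at $\sqrt{d\mu} = -D$ (where $C_-$ has the indeterminate form $0/0$) and at $\lambda = 0$. These transitions are precisely those already handled by the L'H\^opital-type limits in the proof of Lemma \ref{FIS}, and the same computations show that $C_-$ and $c_{-i}$ extend continuously to the stationary-solution formula in line two of \eqref{DOL}, and that the $\lambda = 0$ formulas match smoothly onto both the $\lambda>0$ and $\lambda<0$ expressions. Once this continuity is in place, compactness closes out the proof.
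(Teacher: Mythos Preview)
Your direct argument is correct and rests on the same two pillars as the paper's: the uniform divergence $m(\lambda)\to\infty$ as $\lambda\to(\pi^2/d)^-$ confines $\lambda$ to a compact subinterval $[-dR^2,\lambda^*]$, and the explicit solution formulas then bound $L_i$. The paper wraps the first step inside a contradiction argument (assume an unbounded sequence $(\lambda^j,L_i^j)$; extract monotone $\lambda^j$; show $\lambda^j\to\pi^2/d$ is forced; contradict boundedness of $m(\lambda^j)$), but the more substantive difference is in the second step. Rather than bounding $L_i$ directly from the five cases of \eqref{DOL}, the paper first bounds $\mathrm{tr}(L)$ from its simpler three-case formula---where the case-boundary matching is easy, e.g.\ $C_-\to\pm\infty$ gives $\mathrm{tr}(L)\to-\sqrt{d\mu}$ immediately---and then treats $L_i' = -\mathrm{tr}(L)\,L_i - \lambda$ as a \emph{linear} first-order ODE in $L_i$ with now-bounded coefficient $\mathrm{tr}(L)$ and integral constraint $\int_0^1 L_i = D_i$, so that a bound on $L_i$ follows by elementary estimates without ever touching the constants $c_{\pm i}$. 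This two-step decomposition is precisely what sidesteps the delicate continuity checks you flag as your principal obstacle. One small correction to your write-up: at $\sqrt{d\mu}=-D$ the constant $C_-$ has the form (nonzero)$/0$, not $0/0$, so it genuinely diverges; it is $L_i$ itself, not the individual auxiliary constants, that extends continuously across the case boundaries, and your sentence asserting continuity of those constants on the full compact parameter set overstates what actually holds.
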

\begin{proof}
 Assume to the contrary that no such $R'>0$ exists. 
 Then there exists an unbounded sequence of solutions $(\lambda^j,L_i^j)$ to \eqref{TL}, \eqref{TM} and \eqref{ICT} with 
 $\left|D_i^j\right|\le R$ for each $i=1,\cdots,d$. Here, $j\in \mathbb{N}$ is used to distinguish the different elements of the sequence. 
 
 By taking a subsequence of $(\lambda^j,L_i^j)$ if necessary, we can assume that 
 the $\lambda^j$ terms are monotone increasing or decreasing. We already know that $\lambda^j\in [-\frac{(D^j)^2}{d},\frac{\pi^2}{d}]\subseteq [-d R^2,\frac{\pi^2}{d}]$, 
 so the $\lambda^j$ terms are bounded, 
 hence convergent. We claim that $\lambda_j$ converges to $\frac{\pi^2}{d}$ from below. If this were not the case, then 
 there would be some $K<\frac{\pi^2}{d}$ such that $-dR^2\le \lambda^j\le K$. Taking the trace of \eqref{TL} implies that 
 $tr(L^j)'=-tr(L^j)^2-d\lambda^j$ subject to $\int_{0}^{1}tr(L^j)=D^j$. Similarly to the proof of Lemma \ref{SWL}, we deduce that the solution of this equation is 
\begin{align*}
 \begin{split}
tr(L^j)(r)=
 \begin{cases}
  \frac{\sqrt{d\mu^j}(e^{2\sqrt{d\mu^j }r}-C^j_{-})}{(C^j_{-}+e^{2\sqrt{d\mu^j}r})}&  \text{if} -\lambda^j=\mu^j>0 \ \text{and} \ \sqrt{d\mu^j}\neq -D^j,\\
-\sqrt{d\mu^j}&  \text{if} -\lambda^j=\mu^j>0 \ \text{and} \ \sqrt{d\mu^j}= -D^j,\\
 \frac{1}{(C^j_{0} +r)}&  \text{if} \ \lambda^j=0 \ \text{and} \ D^j\neq 0, \\
 0& \text{if} \ \lambda^j=0 \ \text{and} \ D^j= 0, \\
 -\sqrt{d\lambda^j}\tan({C^j_{+}} +\sqrt{d\lambda^j}r)& \text{if} \ 0<\lambda^j<\frac{\pi^2}{d},\\
 \end{cases}
 \end{split} 
\end{align*}

where 
\begin{align*}
 C^j_{-}&=\frac{e^{2\sqrt{d\mu^j}}- e^{(D^j+\sqrt{d\mu^j})}}{ e^{(D^j+\sqrt{d\mu^j})}-1}, \\
 C^j_{0}&=\frac{1}{e^{D^j}-1},\\
 C^j_{+}&=\arctan\left(\frac{\cos(\sqrt{d\lambda^j})-e^{D^j}}{\sin(\sqrt{d\lambda^j})}\right).
\end{align*}
 Since
 $-dR^2\le \lambda^j\le K<\frac{\pi^2}{d}$ and $\left|D^j\right|\le dR$, it is straightforward to show that $tr(L^j)$ is bounded independently of $j$. 
 Now if we treat $tr(L^j)$ as a given function, we can think of \eqref{TL} coupled with \eqref{ICT} as a linear equation for $L_i^j$. This equation is easily solved 
 in terms of $tr(L^j)$, and the bound on $tr(L^j)$ then implies that $L_i^j$ itself is bounded independently of $j$ for each $i=1,\cdots,d$. However, 
 we now have bounds on both $\lambda^j$ and $L_i^j$, which contradicts the assumption that $(\lambda^j,L_i^j)$ is unbounded. 
 
 We now know that $\lambda^j\to \frac{\pi^2}{d}$ from below. However, in this case, 
 $C_+^j\to -\frac{\pi}{2}$, which implies that $m(\lambda^j)\to \infty$. 
 This is a contraction because $m(\lambda^j)$ must coincide with $\sqrt{\frac{d}{d-1}\sum_{i=1}^{d}(D_i^j-\frac{D^j}{d})^2}$, which is bounded. 
\end{proof}

\section{General Cohomogeneity One Manifolds}
In this section, we use Schauder degree theory to prove Theorem \ref{MRCHEM}. Before we do this, however, 
we briefly recall some relevant information about the Schauder degree. 
Let $X$ be a Banach space and let $I:X\to X$ be the identity mapping. 
Choose some $z\in X$, some bounded convex open subset $\Omega\subset X$ and some function $k:\bar{\Omega}\to X$, where $\bar{\Omega}$ means the closure of $\Omega$. 
The Schauder degree of $I-k$ in $\Omega$ over $z$ is denoted
$deg(I-k,\Omega,z)$, and can be defined whenever $k$ is a completely continuous map and $z\notin (I-k)(\partial \Omega)$. 
The following theorem is standard and states several facts about the Schauder degree that we will use in this section. These results and many others 
relating to the Schauder degree can be found in \cite{ASDT}, for instance. 
\begin{theorem}\label{SDTT} 
The Schauder degree has the following properties: 

(i) If $deg(I-k,\Omega,0)\neq 0$, there exists $x\in \Omega$ such that $k(x)=x$.

(ii) If $0\in \Omega$, then 
$deg(I,\Omega,0)=1$. 

(iii) If $H:[0,1]\times \bar{\Omega}\to X$ is a completely continuous map such such that $H(t,x)\neq x$ for all 
$x\in \partial \Omega$ and $t\in [0,1]$, then 
$deg(I-H(t,\cdot),\Omega,0)$ is independent of $t\in [0,1]$. 
\end{theorem}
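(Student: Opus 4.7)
The plan is to reduce each of the three properties to the corresponding statement for the Brouwer degree in finite dimensions, via Leray and Schauder's finite-rank approximation. Since $k:\bar\Omega\to X$ is completely continuous and $\bar\Omega$ is bounded, $k(\bar\Omega)$ is relatively compact, so for every $\epsilon>0$ one can construct a continuous $k_\epsilon:\bar\Omega\to X$ whose range lies in a finite-dimensional subspace $Y_\epsilon\subset X$ and which satisfies $\|k_\epsilon-k\|_\infty<\epsilon$, obtained by composing $k$ with a Schauder projection onto an $\epsilon$-net of $\overline{k(\bar\Omega)}$. Because $k$ is compact, $I-k$ is a closed map on $\bar\Omega$ (a convergent image sequence forces, via a subsequence of $k(x_n)$, convergence of the $x_n$ themselves into $\bar\Omega$), so the hypothesis $z\notin(I-k)(\partial\Omega)$ gives $\delta:=\mathrm{dist}(z,(I-k)(\partial\Omega))>0$. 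For any $\epsilon<\delta/2$ and any finite-dimensional $Y\supseteq Y_\epsilon\cup\{z\}$ one then defines $deg(I-k,\Omega,z):=deg_B(I-k_\epsilon,\Omega\cap Y,z)$, where $deg_B$ is the Brouwer degree.

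Once this construction is in place, each of (i)--(iii) becomes essentially a transcription of the corresponding Brouwer-degree fact. Property (ii) is immediate: for $k=0$ the identity is already finite-dimensional on any $Y\ni 0$, and $deg_B(I,\Omega\cap Y,0)=1$ is the Brouwer normalisation. Property (i) is proved by contrapositive: if $k$ has no fixed point in $\bar\Omega$, then the closedness argument above (applied with $z=0$) gives $\inf_{\bar\Omega}\|x-k(x)\|>0$, so $I-k_\epsilon$ also omits $0$ on $\bar\Omega\cap Y$ for small $\epsilon$, and hence its Brouwer degree over $0$ vanishes. For (iii), the homotopy $H$ is completely continuous on the bounded cylinder $[0,1]\times\bar\Omega$, so a single finite-rank approximation $H_\epsilon$ exists with $\|H_\epsilon-H\|_\infty$ smaller than $\inf_{[0,1]\times\partial\Omega}\|x-H(t,x)\|>0$; Brouwer's homotopy invariance in the fixed finite-dimensional subspace then yields constancy of $deg(I-H(t,\cdot),\Omega,0)$ in $t$.

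The main obstacle is not any one of (i)--(iii) but rather the well-definedness of the Leray--Schauder degree itself: one has to verify that the Brouwer value is independent of the choice of approximation $k_\epsilon$ (via a straight-line Brouwer homotopy between two sufficiently close approximations, which remains admissible since all interpolants keep $z$ outside the image of $\partial\Omega$) and of the enlargement $Y$ (via the Brouwer suspension formula $deg_B(I\oplus(I-k_\epsilon),\Omega\cap Y',z)=deg_B(I-k_\epsilon,\Omega\cap Y,z)$). Since these verifications, together with the three properties in the theorem, form the standard axiomatic development of the Leray--Schauder degree, in practice one simply invokes \cite{ASDT}, which is exactly what the author does.
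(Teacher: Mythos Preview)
Your sketch is correct and follows the standard Leray--Schauder construction via finite-rank approximation and reduction to the Brouwer degree. Note, however, that the paper does not prove this theorem at all: it is stated as a known result, with the sentence ``These results and many others relating to the Schauder degree can be found in \cite{ASDT}, for instance'' serving as the entire justification. So your proposal is not so much a different route as simply \emph{more} than what the paper does; you have outlined the contents of the cited reference, and you yourself observe in your final sentence that the author merely invokes \cite{ASDT}.
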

Property $(iii)$ is referred to as the homotopy invariance of the Schauder degree. The strategy of this section is to use homotopies to deform the 
identity mapping into a mapping whose fixed points correspond to solutions of \eqref{EM1} and \eqref{EM2} with Dirichlet conditions, and then use Theorem~\ref{SDTT} 
to prove the existence of a solution. 
To do this, let $X=\mathbb{R}\times C^1([0,1];\mathbb{R}^n)$, let $\tilde{S}:\mathbb{R}^n\to \mathbb{R}$ and $\tilde{R}:\mathbb{R}^n\to \mathbb{R}^n$ 
be some continuous functions, let $c_0,c_1$ be vectors in $\mathbb{R}^n$, and define 
$H:[0,1]\times X\to X$ by
\begin{align*}
H&(t,\lambda,y)=\biggr(\frac{p_1(t)}{d-1}\sum_{i=1}^{n}d_i\left(
-y_i'\sum_{k=1}^{n}d_ky_k'+y_i'^2\right)\biggr\rvert_{r=0}+\frac{p_4(t)\tilde{S}(y)}{d-1}\biggr\rvert_{r=0},\\
&p_3(t)c_0(1-r)+p_3(t)c_1r+\int_{0}^{1}G(x,r)\bigg(-p_2(t)y'(x)\sum_{k=1}^{n}d_ky_k'(x)-p_2(t)\lambda +p_4(t)\tilde{R}(y(x))\bigg)dx\biggr).\\
\end{align*}
Here, $p_1,p_2,p_3,p_4:[0,1]\to [0,1]$ are defined by 
\begin{align*}
 p_i(t)=\begin{cases}
         0 & \ \text{if}\ t\le  \frac{i-1}{4},\\
         4t-(i-1) &\ \text{if}\ \frac{i-1}{4}< t\le  \frac{i}{4},\\
         1 &\ \text{if}\ t> \frac{i}{4},
        \end{cases}
\end{align*}
and $G:[0,1]\times [0,1]\to \mathbb{R}$ is the Green's function for the equation $y''=0$ on $[0,1]$ defined by 
\begin{align*}
 G(x,r)&=\begin{cases}
          x(r-1) & \text{if} \ 0\le x\le r\le 1,\\
          (x-1)r & \text{if} \ 0 \le r< x\le 1.
         \end{cases}
\end{align*}

Note that $\mathbb{R}\times C^1([0,1];\mathbb{R}^n)$ is a Banach space under the product norm, and $H$ is completely continuous by the Arzela-Ascoli Theorem. 
Furthermore, for a given $t\in [0,1]$, fixed points $(\lambda,y)$ of $H(t,\cdot)$ are exactly those solving 
\begin{align}\label{ETPG}
\begin{split}
p_4(t)\tilde{S}(y)\vert_{r=0}+p_1(t)\sum_{i=1}^{n}d_i\left(
-y_i'\sum_{k=1}^{n}d_ky_k'+y_i'^2\right)\biggr\rvert_{r=0}&=(d-1)\lambda,\\
p_4(t)\tilde{R}_i(y)-p_2(t)y_i'\sum_{k=1}^{n}d_ky_k'-y_i''&=p_2(t)\lambda, \ i=1,\cdots,n,\\
y(0)&=p_3(t)c_0,\\
y(1)&=p_3(t)c_1.
\end{split}
\end{align}
We will frequently use the fact that fixed points of $H(t,\cdot)$ are in one-to-one correspondance with solutions of \eqref{ETPG}. 
From now on, we set $c_0=(\ln(a_1),\cdots,\ln(a_n))$, $c_1=(\ln(b_1),\cdots,\ln(b_n))$ 
and 
\begin{align*}
\tilde{S}(y)&= \sum_{i=1}^{n}d_i\left(
\frac{\beta_i}{2e^{2y_i}}+\sum_{k,l=1}^{n}\gamma^{l}_{ik}\frac{e^{4y_i}-2e^{4y_k}}{4 e^{2y_i+2y_k+2y_l}}\right),\\
\tilde{R}_i(y)&=\frac{\beta_i}{2e^{2y_i}}+\sum_{k,l=1}^{n}\gamma^{l}_{ik}\frac{e^{4y_i}-2e^{4y_k}}{4 e^{2y_i+2y_k+2y_l}}.
\end{align*}
If we make the transformation $y_i=\ln(f_i)$ with these choices of $c_0,c_1,\tilde{S}$ and $\tilde{R}$, and set $t=1$, then  
the problem of finding a pair $(\lambda,y)$ solving \eqref{ETPG} is the same as solving \eqref{EM3} and \eqref{EM4} subject to the Dirichlet conditions. 

The main task of this section is to construct an appropriate bounded set $\Omega$, so that $deg(I-H(\frac{3}{4},\cdot),\Omega,0)\neq 0$. 
To find such a set, we need to construct a bounded set which contains the fixed points of $H(t,\cdot)$ for any $t\in [0,\frac{3}{4}]$. 
The next lemma describes the fixed points of $H(t,\cdot)$ for $t\in [0,\frac{1}{2}]$. 
\begin{lemma}\label{1100}
Fix $t\in [0,\frac{1}{2}]$. Then $(\lambda,y)=(0,0)$ is the unique solution of \eqref{ETPG}, hence the unique fixed point of $H(t,\cdot)$.
\end{lemma}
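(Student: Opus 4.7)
The plan is to exploit the fact that $p_3(t) = p_4(t) = 0$ throughout $t \in [0, 1/2]$, so that \eqref{ETPG} collapses to homogeneous Dirichlet data $y(0) = y(1) = 0$ with the $\tilde{S}, \tilde{R}$ terms eliminated. Writing $w := \sum_k d_k y_k'$, the system reduces to
\begin{align*}
p_1(t)\Bigl(\sum_{i=1}^{n} d_i y_i'(0)^2 - w(0)^2\Bigr) &= (d-1)\lambda, \\
y_i'' + p_2(t)\, w\, y_i' &= -p_2(t)\, \lambda, \quad i = 1, \ldots, n,
\end{align*}
with $y(0) = y(1) = 0$. For $t \in [0, 1/4]$ we have $p_2(t) = 0$, so the ODEs become $y_i'' = 0$, which together with the boundary conditions gives $y \equiv 0$, and then the algebraic equation yields $\lambda = 0$.

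For $t \in (1/4, 1/2]$, where $p_1(t) = 1$ and $p_2(t) > 0$, the key step is to observe that, given any fixed point, every $y_i$ satisfies the \emph{same} linear second-order BVP with the same boundary values. The homogeneous problem $y_i'' + p_2(t)\, w\, y_i' = 0$, $y_i(0) = y_i(1) = 0$, has only the trivial solution: using the integrating factor $\exp\bigl(p_2(t)\int_0^r w\bigr)$, any solution is proportional to the integral of a strictly positive function on $[0,1]$, which forces the proportionality constant to vanish. Hence uniqueness for the inhomogeneous BVP forces $y_1 = \cdots = y_n =: z$. With $w = d\, z'$, the system collapses to the scalar BVP
\[
z'' + p_2(t)\, d\, (z')^2 = -p_2(t)\, \lambda, \qquad z(0) = z(1) = 0,
\]
together with the consistency condition $\lambda = -d\, z'(0)^2 \le 0$ extracted from the algebraic equation.

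The case $\lambda = 0$ is immediate: $z'(0) = 0$ and the Riccati equation $(z')' = -p_2(t)\, d\, (z')^2$ force $z' \equiv 0$, hence $z \equiv 0$. To rule out $\lambda < 0$, I would substitute $v = e^{p_2(t)\, d\, z}$, which linearizes the ODE into $v'' = -p_2(t)^2\, d\, \lambda\, v$ with $v(0) = v(1) = 1$. The explicit solution $v(r) = \cosh(kr) - \tanh(k/2)\sinh(kr)$, with $k = p_2(t)\sqrt{-d\lambda} > 0$, yields $z'(0) = -k\tanh(k/2)/(p_2(t)\, d)$; combining the consistency relation $\lambda = -d\, z'(0)^2$ with $k^2 = -p_2(t)^2\, d\, \lambda$ then reduces to $\tanh^2(k/2) = 1$, impossible for finite $k > 0$. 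Thus $(\lambda, y) = (0,0)$ is the unique fixed point. The main subtlety is the decoupling step: although an energy-style integration-by-parts looks tempting, the cleanest path is to treat $w$ as given and invoke linear BVP uniqueness, collapsing the whole problem to a single scalar ODE that can be handled by an elementary exponential substitution.
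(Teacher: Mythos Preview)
Your argument is correct and takes a genuinely different route from the paper. For $t\in(1/4,1/2]$ the paper does not decouple the $y_i$'s directly; instead it ``inflates'' the $n$-variable system to a $d$-variable one by setting $L_i=p_2\,y_j'$ for $i\in S_j$, observes that $(p_2^2\lambda,L)$ then solves the torus system \eqref{TL}--\eqref{ICT} with all $D_i=0$, and invokes the uniqueness established throughout Section~3 (Theorem~\ref{MRT}) to conclude $L\equiv0$, $\lambda=0$. Your approach avoids Section~3 entirely: freezing $w=\sum_k d_k y_k'$, you use an integrating-factor argument to show the linear BVP $u''+p_2wu'=0$, $u(0)=u(1)=0$ has only the trivial solution, which forces $y_1=\cdots=y_n$; the resulting scalar problem is then dispatched by the substitution $v=e^{p_2 d z}$ and an elementary $\tanh$ contradiction. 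The paper's route is shorter on the page because the heavy lifting has already been done in Section~3, while yours is fully self-contained and arguably more transparent about why the coupling disappears. Both are valid; the trade-off is reusing established machinery versus a direct computation.
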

\begin{proof}
Suppose $(\lambda,y)$ is a solution of \eqref{ETPG}. 
By our choice of $t$, we know that $p_3(t)=p_4(t)=0$, and either $p_2(t)=0$ and $p_1(t)\in [0,1]$, or $p_1(t)=1$ and $p_2(t)\in (0,1]$. 
If $p_2=0$, then the second equation of \eqref{ETPG} coupled with the Dirichlet conditions implies that $y=0$. 
The first equation of \eqref{ETPG} then implies that $\lambda=0$ as well. 

Now suppose $p_1=1$ and $p_2\in (0,1]$. The numbers $(d_j)_{j=1}^{n}$ partition the set $\{1,\cdots,d\}$ into $n$ distinct subsets 
$$S_j=\left\{\sum_{k=1}^{j-1}d_k+1,\sum_{k=1}^{j-1}d_k+2,\cdots, \sum_{k=1}^{j}d_k\right\}$$ of cardinality $d_j$, and for each $i\in \{1,\cdots, d\}$, define 
$L_i=p_2y_j'$, where $j$ is chosen so that $i\in S_j$. Then 
$L_i$ solves \eqref{TL}, \eqref{TM} and \eqref{ICT} for $D_i=D=0$ and $\lambda$ replaced by $p_2^2(t)\lambda$. 
The uniqueness of solutions described in Section 3 implies that $L_i=p^2_2(t)\lambda=0$, so $y_i=0$ for each $i=1,\cdots,n$ and $\lambda=0$.  
\end{proof}
 The following lemma describes the fixed points of $H$ for $t\in [\frac{1}{2},\frac{3}{4}]$. 
\begin{lemma}\label{1110}
There exists a bounded convex open set $\Omega\subset \mathbb{R}\times C^1([0,1];\mathbb{R}^n)$ 
containing all the solutions $(\lambda,y)$ of \eqref{ETPG} for $t\in [\frac{1}{2},\frac{3}{4}]$, 
hence containing all the fixed points of $H(t,\cdot)$.  
\end{lemma}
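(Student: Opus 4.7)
The plan is to exploit the fact that on $[\tfrac{1}{2},\tfrac{3}{4}]$ the cutoff functions collapse \eqref{ETPG} to essentially the torus problem of Section 3, and then to invoke Lemma \ref{P3DC} for a uniform a priori bound. First I would observe that the piecewise-linear definitions give $p_1(t) = p_2(t) = 1$ and $p_4(t) = 0$ throughout $[\tfrac{1}{2},\tfrac{3}{4}]$, while $p_3(t) \in [0,1]$ interpolates between the zero Dirichlet data and the genuine boundary data. So the curvature terms $\tilde{S}$ and $\tilde{R}$ drop out entirely and \eqref{ETPG} reduces to
\begin{align*}
(d-1)\lambda &= \sum_{i=1}^{n}d_i\Bigl((y_i')^2 - y_i'\sum_{k=1}^{n}d_k y_k'\Bigr)\Big|_{r=0},\\
y_i'' &= -y_i'\sum_{k=1}^{n}d_k y_k' - \lambda, \qquad i=1,\ldots,n,\\
y(0) &= p_3(t)c_0, \qquad y(1) = p_3(t)c_1.
\end{align*}

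Next I would mirror the blockwise construction from the proof of Lemma \ref{1100}: partition $\{1,\ldots,d\}$ into subsets $S_1,\ldots,S_n$ with $|S_j|=d_j$, and set $L_i(r) := y_j'(r)$ whenever $i\in S_j$. Using the identities $\sum_{i=1}^{d}L_i = \sum_{j=1}^{n}d_j y_j'$ and $\sum_{i=1}^{d}L_i^2 = \sum_{j=1}^{n}d_j(y_j')^2$, a direct check (essentially the same as in Lemma \ref{1100}) shows that $(\lambda,L)$ solves the torus equations \eqref{TL}, \eqref{TM} and \eqref{ICT} for the $d$-dimensional torus, with Dirichlet data $D_i = p_3(t)(\ln b_j - \ln a_j)$ for $i\in S_j$.

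Now I would apply Lemma \ref{P3DC}. Setting $R := \max_{1\le j\le n}|\ln(b_j/a_j)|$, the bound $p_3(t)\le 1$ gives $|D_i|\le R$ uniformly in $t\in[\tfrac{1}{2},\tfrac{3}{4}]$, so there exists $R'=R'(R)>0$ with $|\lambda|<R'$ and $|L_i(r)|<R'$. Translating back, $\|y_j'\|_\infty < R'$ for every $j$, and since $|y_j(0)| = |p_3(t)\ln a_j| \le \max_j|\ln a_j|$, we also get a uniform bound on $\|y_j\|_\infty$. Together these yield a constant $M>0$, depending only on $\hat g_0, \hat g_1$, such that every solution $(\lambda,y)$ of \eqref{ETPG} for $t\in[\tfrac{1}{2},\tfrac{3}{4}]$ satisfies $|\lambda| + \|y\|_{C^1([0,1];\mathbb{R}^n)} < M$.

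Finally I would take $\Omega$ to be the open ball of radius $M+1$ centred at $(0,0)$ in $\mathbb{R}\times C^1([0,1];\mathbb{R}^n)$; it is trivially bounded, open and convex, and by construction contains every fixed point of $H(t,\cdot)$ for $t\in[\tfrac{1}{2},\tfrac{3}{4}]$. No step looks hard: the only piece of real content is recognising that on this range of $t$ the system is a rescaled torus problem, after which Lemma \ref{P3DC} does all the work.
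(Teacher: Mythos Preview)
Your proposal is correct and follows essentially the same approach as the paper: both reduce \eqref{ETPG} on $[\tfrac12,\tfrac34]$ to the torus system \eqref{TL}, \eqref{TM}, \eqref{ICT} via the blockwise assignment $L_i=y_j'$, and then invoke Lemma~\ref{P3DC} for a uniform bound. You are in fact slightly more explicit than the paper, since you spell out how the $L_i$ bound upgrades to a full $C^1$ bound on $y$ (via the boundary values $y(0)=p_3(t)c_0$) and you name a concrete $\Omega$.
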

\begin{proof}
By our choice of $t$, we know that $p_1(t)=p_2(t)=1$, $p_4(t)=0$, and $p_3(t)\in [0,1]$. 
As in the proof of Lemma \ref{1100} partition the set $\{1,\cdots,d\}$ into $n$ distinct subsets 
$S_j$, and for each $i\in \{1,\cdots, d\}$, define 
$L_i=y_j'$, where $j$ is chosen so that $i\in S_j$.
Then $(\lambda,L_i)$ solves \eqref{TL}, \eqref{TM} and \eqref{ICT}, except now, $D_i=p_3(\ln(b_j)-\ln(a_j))$. 
Therefore, the proof will be complete if we can 
demonstrate that any solution $(\lambda,L_i)$ of these three equations is bounded independently of $p_3\in [0,1]$. Such a bound is provided by Lemma \ref{P3DC}. 
\end{proof}
From now on, we fix some $\Omega$ satisfying the conclusion of Lemma \ref{1110}. 
\begin{cor}\label{T34}
On $\Omega$, we have  
$deg(I-H(\frac{3}{4},\cdot),\Omega,0)=1$. 
\end{cor}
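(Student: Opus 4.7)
The plan is to apply the homotopy invariance of the Schauder degree (Theorem \ref{SDTT}(iii)) along the family $H(t,\cdot)$ for $t\in [0,\tfrac{3}{4}]$, connecting $H(\tfrac{3}{4},\cdot)$ back to the trivial endpoint $H(0,\cdot)$. The paper has already observed that $H$ is completely continuous, so the only hypothesis of Theorem \ref{SDTT}(iii) that needs verification is that no fixed point of $H(t,\cdot)$ with $t\in[0,\tfrac{3}{4}]$ lies on $\partial\Omega$.

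To verify this, I would split the parameter interval at $t=\tfrac{1}{2}$. For $t\in[\tfrac{1}{2},\tfrac{3}{4}]$ Lemma \ref{1110} places every fixed point of $H(t,\cdot)$ inside the open set $\Omega$, so none can lie on $\partial\Omega$. For $t\in[0,\tfrac{1}{2}]$ Lemma \ref{1100} says that $(0,0)$ is the unique fixed point. Since $(0,0)$ is simultaneously a fixed point of $H(\tfrac{1}{2},\cdot)$ (the trivial solution of \eqref{ETPG} with $p_3(\tfrac{1}{2})=p_4(\tfrac{1}{2})=0$ and $p_1(\tfrac{1}{2})=p_2(\tfrac{1}{2})=1$), Lemma \ref{1110} forces $(0,0)\in\Omega$, and openness of $\Omega$ places it in the interior. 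In particular $0\in\Omega$ as an element of the ambient Banach space.

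Homotopy invariance then gives
\begin{align*}
\deg(I-H(\tfrac{3}{4},\cdot),\Omega,0)=\deg(I-H(0,\cdot),\Omega,0).
\end{align*}
Because $p_1(0)=p_2(0)=p_3(0)=p_4(0)=0$, the map $H(0,\cdot)$ sends every input to $(0,0)$, so $I-H(0,\cdot)=I$. Since $0\in\Omega$, Theorem \ref{SDTT}(ii) yields $\deg(I,\Omega,0)=1$, completing the argument. There is no real obstacle here: the substantive work was carried out in Lemmas \ref{1100} and \ref{1110}, and this corollary is the standard homotopy-to-the-identity packaging of those results.
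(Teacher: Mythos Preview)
Your proof is correct and follows essentially the same route as the paper: split $[0,\tfrac{3}{4}]$ at $\tfrac{1}{2}$, use Lemmas \ref{1100} and \ref{1110} to keep fixed points off $\partial\Omega$, observe $(0,0)\in\Omega$, and invoke Theorem \ref{SDTT}(ii)--(iii). You are simply a bit more explicit than the paper in noting that $H(0,\cdot)\equiv 0$ so that $I-H(0,\cdot)=I$.
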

\begin{proof}
If $t=\frac{1}{2}$, then the solution of \eqref{ETPG} is $(0,0)$ by Lemma \ref{1100}, so the set
$\Omega$ must contain $(0,0)$. 
Therefore, by Lemma \ref{1100}, no fixed points of $H(t,\cdot)$ occur on $\partial \Omega$ for any $t\in [0,\frac{1}{2}]$.  
Furthermore, Lemma \ref{1110} implies that no fixed points of $H(t,\cdot)$ 
occur on $\partial \Omega$ for any $t\in [\frac{1}{2},\frac{3}{4}]$. 
Therefore, properties $(ii)$ and $(iii)$ of Theorem \ref{SDTT} imply that $deg(I-H(\frac{3}{4},\cdot),\Omega,0)=deg(I-H(0,\cdot),\Omega,0)=1$. 
\end{proof}
We are now in a position to prove existence of solutions to \eqref{ETPG}. 
\begin{lemma}\label{TFL}
 Set $t=1$. There exists some $q\in (0,1]$ such that \eqref{ETPG} has a solution on $[0,\sqrt{q}]$ with the constraint $y_i(1)=\ln(b_i)$ replaced by 
 $y_i(\sqrt{q})=\ln(b_i)$. 
\end{lemma}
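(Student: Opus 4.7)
The plan is to combine a scaling symmetry of the system \eqref{ETPG} with the Schauder degree machinery already in place. The key observation is this: if $(\lambda, y)$ solves \eqref{ETPG} at the parameter $t = (3+q)/4 \in [3/4, 1]$ on the unit interval, so that $p_1(t) = p_2(t) = p_3(t) = 1$ and $p_4(t) = q$, then the rescaled pair $(\tilde{\lambda}, \tilde{y})$ given by $\tilde{\lambda} = \lambda/q$ and $\tilde{y}(s) := y(s/\sqrt{q})$ for $s \in [0, \sqrt{q}]$ solves \eqref{ETPG} at $t = 1$ with the boundary condition $\tilde{y}(\sqrt{q}) = c_1$ in place of $\tilde{y}(1) = c_1$. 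Indeed, substituting $y'(r) = \sqrt{q}\, \tilde{y}'(\sqrt{q}\, r)$ and $y''(r) = q\, \tilde{y}''(\sqrt{q}\, r)$ into \eqref{ETPG}, every first-derivative-squared term and every second-derivative term picks up a factor of $q$, matching the $p_4 = q$ prefactor on $\tilde{S}$ and $\tilde{R}$; dividing the resulting equation through by $q$ absorbs this into the replacement $\lambda \mapsto \lambda/q$. The pointwise constraint at $r = 0$ rescales in exactly the same way. Hence it suffices to produce a solution of \eqref{ETPG} at some $t^* \in (3/4, 1]$; setting $q = 4 t^* - 3 \in (0, 1]$ will then complete the proof.

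To obtain such a $t^*$, I would leverage Corollary \ref{T34}, which supplies both $deg(I - H(3/4, \cdot), \Omega, 0) = 1$ and the fact that $H(3/4, \cdot)$ has no fixed points on $\partial \Omega$. My aim is to show that there exists $\epsilon > 0$ so that $H(t, \cdot)$ has no fixed points on $\partial \Omega$ for any $t \in [3/4, 3/4 + \epsilon]$. Once this is in hand, the homotopy invariance of the Schauder degree (Theorem \ref{SDTT}(iii)) gives $deg(I - H(t^*, \cdot), \Omega, 0) = 1$ for every $t^* \in [3/4, 3/4 + \epsilon]$, and Theorem \ref{SDTT}(i) then furnishes the required fixed point of $H(t^*, \cdot)$ in $\Omega$, corresponding to a solution of \eqref{ETPG} at $t = t^*$.

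The absence of boundary fixed points in a right-neighbourhood of $t = 3/4$ is a standard consequence of compactness. Suppose otherwise: then there would exist sequences $t_k \searrow 3/4$ and $x_k \in \partial \Omega$ with $H(t_k, x_k) = x_k$. Because $\bar{\Omega}$ is bounded and $H$ is completely continuous, the sequence $\{x_k\} = \{H(t_k, x_k)\}$ is relatively compact in $\mathbb{R} \times C^1([0,1]; \mathbb{R}^n)$, so after passing to a subsequence $x_k \to x^* \in \partial \Omega$ (using that $\partial \Omega$ is closed). Joint continuity of $H$ then yields $H(3/4, x^*) = x^*$, producing a fixed point of $H(3/4, \cdot)$ on $\partial \Omega$ and contradicting Corollary \ref{T34}.

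The principal obstacle is really just keeping track of the scaling: each term of \eqref{ETPG} must rescale homogeneously in powers of $\sqrt{q}$ with the factors balancing correctly. The degree-theoretic step, by contrast, is a routine continuation argument whose only subtle point is the use of complete continuity to extract a convergent subsequence in the infinite-dimensional Banach space $\mathbb{R} \times C^1([0,1]; \mathbb{R}^n)$.
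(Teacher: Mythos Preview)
Your proof is correct, and it uses the same two core ingredients as the paper: the scaling symmetry that converts a solution of \eqref{ETPG} at parameter $t\in(\tfrac34,1]$ into a solution at $t=1$ on the shorter interval $[0,\sqrt{p_4(t)}]$, together with Corollary~\ref{T34} and homotopy invariance of the degree. Your verification of the scaling is accurate.

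Where your argument differs is in the logical structure. You run a compactness--continuation argument to show that $H(t,\cdot)$ has no boundary fixed points on some interval $[\tfrac34,\tfrac34+\epsilon]$, then invoke degree theory at $t^*=\tfrac34+\epsilon$ to obtain an \emph{interior} fixed point, and finally rescale. The paper instead proceeds by a clean dichotomy: either $H(t,\cdot)$ has no fixed points on $\partial\Omega$ for \emph{any} $t\in[\tfrac34,1]$, in which case homotopy invariance carries the degree all the way to $t=1$ and one takes $q=1$; or there \emph{is} a fixed point $(\lambda,y)\in\partial\Omega$ at some $t\in(\tfrac34,1]$, and that very boundary fixed point, after the rescaling $\bar y(x)=y(x/\sqrt{p_4(t)})$, $\bar\lambda=\lambda/p_4(t)$, is already the desired solution with $q=p_4(t)$. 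This avoids the compactness step altogether and, in the favourable case, yields $q=1$ rather than a small $q$. Your route is a bit longer but perfectly sound; just be sure to pick $t^*$ strictly greater than $\tfrac34$ so that $q=4t^*-3>0$.
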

\begin{proof}
If we can show that $deg(I-H(1,\cdot),\Omega,0)=1$, then we have a fixed point of $H(1,\cdot)$. This fixed point is then a solution of \eqref{ETPG} with $t=1$ as required. Therefore, by Corollary \ref{T34} and Theorem \ref{SDTT}, 
it suffices to demonstrate that no fixed points of $H(t,\cdot)$ occur on $\partial \Omega$ for any $t\in [\frac{3}{4},1]$. 
By definition of $\Omega$, we know this cannot occur if $t=\frac{3}{4}$. However, if we do have a fixed point $(\lambda,y)$ on $\partial \Omega$ 
for some $\frac{3}{4}<t\le 1$, then define 
$\bar{y}_i:[0,\sqrt{p_4(t)}]\to \mathbb{R}$ by $\bar{y}_i(x)=y_i(\frac{x}{\sqrt{p_4(t)}})$ and $\bar{\lambda}=\frac{\lambda}{p_4(t)}$.
Then $(\bar{\lambda},\bar{y})$ is a solution of \eqref{ETPG} with the $1$ in $y_i(1)=b_i$ replaced by $\sqrt{p_4(t)}$. Taking $q=p_4(t)$ completes the proof. 
\end{proof}
Using the transformation $f_i=e^{y_i}$, Lemma \ref{TFL} gives us a solution $(\lambda,f)$ of 
\eqref{EM3} and \eqref{EM4} alongside the Dirichlet conditions $f_i(0)=a_i$ and $f_i(\sqrt{q})=b_i$. Therefore, this lemma demonstrates that for any two $G$-invariant Riemannian metrics $\hat{g}_0$ and $\hat{g}_1$ on $G/H$, there exists a 
$q>0$ such that there exists a $G$-invariant Einstein metric 
on $G/H\times [0,\sqrt{q}]$ that coincides with $\hat{g}_0$ and $\hat{g}_1$ on $G/H\times \{0\}$ and $G/H\times \{\sqrt{q}\}$, respectively. 
Since the Einstein equation is diffeomorphism invariant, the pullback of our solution under any $G$-invariant diffeomorphism 
from $G/H\times [0,1]$ to $G/H\times [0,\sqrt{q}]$ will also 
be Einstein with the boundary conditions preserved 
(the diffeomorphism sending $(x,r)\in G/H\times [0,1]$ to $(x,\sqrt{q}r)\in G/H\times [0,\sqrt{q}]$ will work). 
This proves Theorem \ref{MRCHEM}. 
\begin{remark}\label{EP}
Although we have an Einstein metric on $G/H\times [0,1]$, having to make $q$ smaller in Lemma \ref{TFL} corresponds to shortening the length of 
the interval $[0,1]$ with respect to the Einstein metric, which is equivalent to shrinking $h$, the constant defined in Section $2$. 
In Section $3$, no such shortening was required. In fact,
the Einstein equation on $\mathbb{T}^d\times \mathbb{R}$ is invariant under the transformation 
$\bar{f}(r)=f(qr)$ and $\bar{\lambda}=q^2\lambda$, so in addition to requiring that our Einstein metric satisfy the Dirichlet conditions, we can also prescribe its induced length of the interval $[0,1]$ arbitrarily. 
\end{remark}

\section{An Example of Non-Existence}
One might expect from our arguments of Section 3 and Remark \ref{EP} that when solving the Einstein equation, we can prescribe the length of the 
interval $[0,1]$ arbitrarily. 
In this section, we demonstrate that we cannot expect this to occur in general by exploring 
the Einstein equation on $G/H\times [0,1]$, where $G/H$ is the compact homogeneous space $\mathbb{S}^1\times \mathbb{S}^2$ being acted on transitively by 
$G=SO(2)\times SO(3)$. After choosing an $Ad(G)$-invariant metric $Q$ on the Lie algebra of $SO(2)\times SO(3)$ and following the arguments of Section 2, we see that 
the equations \eqref{EM1} and \eqref{EM2} become 
\begin{align}\label{ACE}
\begin{split}
 -\frac{f_1''}{f_1}-2\frac{f_2''}{f_2}&=\lambda,\\
 -\frac{f_1''}{f_1}-2\frac{f_1'f_2'}{f_1f_2}&=\lambda,\\
 -\frac{f_2''}{f_2}-\frac{(f_2')^2}{f_2^2}-\frac{f_1'f_2'}{f_1f_2}+\frac{\mu}{f_2^2}&=\lambda,
 \end{split}
\end{align}
where $\mu>0$ is some number depending on the choice of $Q$. We can eliminate $\lambda$ from these equations, and we find that 
\begin{align}\label{ACE1}
\begin{split}
 \frac{f_2''}{f_2}-\frac{f_1'f_2'}{f_1f_2}&=0,\\
 \frac{f_1''}{f_1}-\frac{f_2'^2}{f_2^2}+\frac{\mu}{f_2^2}&=0. 
 \end{split}
\end{align}
These equations also appear in Section 5 of \cite{BB}. 

Suppose we require that $f_2(0)=f_2(1)=\bar{f}_2$. Then there exists some $r$ such that $f_2'(r)=0$, and 
the first equation of \eqref{ACE1} implies that $f_2'(r)=0$ for all $r\in [0,1]$. Therefore, $f_2(r)=\bar{f}_2$ for all $r\in [0,1]$ and 
the second equation of \eqref{ACE1} becomes 
$\frac{f_1''}{f_1}+\frac{\mu}{\bar{f}_2^2}=0$. It is well-known that this equation is not solvable for small $\bar{f}_2$ 
if we require $f_1>0$ on $[0,1]$. This demonstrates that 
the Einstein equation with these Dirichlet conditions cannot be solved if we also require the length of the interval to be $1$. 
As a consequence, we see that we cannot expect to prescribe length arbitrarily, even though we can in the torus case.

\section{Acknowledgements}
I am grateful to Artem Pulemotov, J{\o}rgen Rasmussen, Joseph Grotowski and Andrew Dancer for helpful discussions on the problem. 
This research was supported by the Australian Government's Research Training Program scholarship and Artem Pulemotov's Discovery Early Career Researcher Award 
DE150101548.

\end{document}